\renewcommand\eqref[1]{(\ref{#1})} %Need with hyperref
\numberwithin{equation}{section}
\theoremstyle{plain}
\newtheorem{theorem}{Theorem}[section]
\newtheorem{proposition}[theorem]{Proposition}
\newtheorem{corollary}[theorem]{Corollary}
\newtheorem{lemma}[theorem]{Lemma}
\theoremstyle{definition}
\newtheorem{definition}[theorem]{Definition}
\newtheorem{remark}[theorem]{Remark}
\DeclareMathOperator*{\esssup}{ess\,sup}
\DeclareMathOperator*{\essinf}{ess\,inf}
\begin{document}

   \title[Subelliptic $p$-Laplacian spectral problem  for H\"ormander vector fields]
   {Subelliptic $p$-Laplacian spectral problem \\ for H\"ormander vector fields}

\author[M. Karazym]{Mukhtar Karazym}
\address{
  Mukhtar Karazym:
  \endgraf
  Department of Mathematics
  \endgraf
Nazarbayev University, Kazakhstan
  \endgraf
  {\it E-mail address} {\rm  mukhtar.karazym@nu.edu.kz; mukhtarkarazym@gmail.com}
  }

 \author[D. Suragan]{Durvudkhan Suragan}
\address{
	Durvudkhan Suragan:
	\endgraf
	Department of Mathematics
	\endgraf
Nazarbayev University, Kazakhstan
	\endgraf
	{\it E-mail address} {\rm durvudkhan.suragan@nu.edu.kz}
}

\keywords{Spectral problem, H\"ormander vector fields,  subelliptic $p$-Laplacian}
\subjclass[2020]{35P30, 35H20, 35J92, 35R03}

\begin{abstract} 
Based on variational methods, we study the spectral problem for the subelliptic $p$-Laplacian  arising from smooth H\"ormander vector fields. We derive the smallest eigenvalue, prove its simplicity and  isolatedness, establish  the positivity of the first eigenfunction and show  H\"older regularity of eigenfunctions with respect to the control distance.  Moreover, we determine the best constant for the $L^{p}$-Poincar\'e-Friedrichs inequality for H\"ormander vector fields as a byproduct.
\end{abstract}
\maketitle

%\tableofcontents

\section{Introduction}
The following nonlinear eigenvalue problem represents the simplest prototype of our objective:
\begin{equation}\label{Dirichlet p-Laplacian}
\begin{aligned}
\operatorname{div}\left(|\nabla u|^{p-2} \nabla u\right)&=-\lambda|u|^{p-2} u  &&\text { in } \Omega, \\
u&=0 &&\text { on } \partial \Omega,
\end{aligned}
\end{equation}
where $1<p<\infty$, $\nabla$ is the usual gradient and $\Omega\subset\mathbb{R}^{n}$ is a bounded domain.  Extensive literature exists on the problem \eqref{Dirichlet p-Laplacian}, see, for example, \cite{Bonder2015review}, \cite{lindqvist2008nonlinear} and references therein. 

However, relatively limited research has been conducted on  eigenvalue problems for the $p$-sub-Laplacian on  stratified Lie groups $\mathbb{G}=\left(\mathbb{R}^{n}, \circ\right)$ (with the group operation $\circ$) 
\begin{equation}\label{Dirichlet p-sub-Laplacian}
\begin{aligned}
\operatorname{div}_{\mathbb{G}}\left(\left|\nabla_{\mathbb{G}} u\right|^{p-2} \nabla_{\mathbb{G}} u\right)&=-\lambda|u|^{p-2} u  &&\text { in } \Omega\subset \mathbb{G}, \\
u&=0 &&\text { on } \partial \Omega,
\end{aligned}
\end{equation}
where $\nabla_{\mathbb{G}}$ and  $\operatorname{div}_{\mathbb{G}} v:=\nabla_{\mathbb{G}} \cdot v$ are the horizontal gradient and  horizontal divergence on $\mathbb{G}$, respectively. In the previous study \cite{WNL2009}, several estimates of the first and second eigenvalues for the Dirichlet $p$-sub-Laplacian \eqref{Dirichlet p-sub-Laplacian} were obtained. Recently, Carfagnini and Gordina \cite{ CG2024} focused on the linear case of the spectral problem \eqref{Dirichlet p-sub-Laplacian} when $p=2$, thereby advancing our understanding of this topic within stratified Lie groups.  Chen and Chen \cite{Chen2021}, assuming only the H\"ormander finite rank condition for smooth vector fields on $\Omega$, studied the eigenvalue problem in Carnot-Carath\'eodory spaces
\begin{equation*}
\begin{aligned}
\Delta_{X} u&=-\lambda u  &&\text { in } \Omega, \\
u&=0 &&\text { on } \partial \Omega.
\end{aligned}
\end{equation*}
It is worth mentioning that, when $p=2$, the best constant of the Poincar\'e-Friedrichs inequality for H\"ormander vector fields was found in \cite{Chen2021}. In this paper, we extend the finding in \cite{Chen2021} and establish the best constant for the entire range $1<p<\infty$ (see Corollary \ref{the best constant of Poincare}). 

Thus, the present paper aims to study the following eigenvalue problem for the subelliptic $p$-Laplacian in Carnot-Carath\'eodory spaces
\begin{equation}\label{Dirichlet p-sub-Laplacian type}
\begin{aligned}
\sum_{i=1}^{m}X_{i}^{*}\left(\left|X u\right|^{p-2} X_{i}u\right)&=\lambda|u|^{p-2} u  &&\text { in } \Omega, \\
u&=0 &&\text { on } \partial \Omega,
\end{aligned}
\end{equation}
where $X_{1}, \ldots, X_{m}$  are smooth  H\"ormander vector fields, $X_{i}^{*}$ is the formal adjoint of $X_{i}$ and $m\leq n$. This type of problem was considered in \cite{RSS2021}, where the authors, for example, studied the simplicity of the first eigenvalue of \eqref{Dirichlet p-sub-Laplacian type} by means of the Picone identity, whereas we prove it by choosing suitable test functions.

The findings presented in this paper contribute to the understanding and analysis of eigenvalue problems for the quasilinear operators \eqref{Dirichlet p-sub-Laplacian type} and provide insights into the behavior of eigenfunctions.  

Our paper is organized as follows. Section \ref{sec2} provides an overview of H\"ormander vector fields, associated Sobolev spaces and Sobolev embeddings. In Section \ref{sec3}, we delve into the study of the nonlinear eigenvalue problem \eqref{Dirichlet p-sub-Laplacian type} concerning smooth H\"ormander vector fields. Our main results encompass the existence of the first eigenvalue and  positiveness of the corresponding eigenfunction. The positiveness result is derived from the application of the Harnack inequality. Similarly to the classical case, determination of the first eigenvalue involves minimizing the Rayleigh quotient, which in turn allows us to find the best constant of the $L^{p}$-Poincar\'e-Friedrichs inequality for H\"ormander vector fields. Section \ref{sec4} focuses on investigating the simplicity and isolated nature of the first eigenvalue.

\section{Preliminaries}\label{sec2}
Unless otherwise stated, let $p>1$ and $m,n\in \mathbb{N}$ with $m\leq n$, where 
$\mathbb{N}=\{1,2,3, \ldots\}$.  If $A \subset B \subset \mathbb{R}^n$ such that  $\bar{A} \subset B$ and $\bar{A}$ is compact, then we write $A \Subset B$. If a normed space $X$ is continuously embedded in another normed space $Y$, then we write $X \hookrightarrow Y$. If $X$ is compactly embedded in $Y$, then we write $X \hookrightarrow \hookrightarrow Y$.

Let $U$ be a bounded domain in $\mathbb{R}^{n}$  with $n\geq 2$ and let 
\begin{equation*}
X_{i}=\sum_{k=1}^{n} b_{ik}(x) \partial_{x_{k}}, \quad i=1,\ldots,m,
\end{equation*}
be a collection of smooth vector fields defined on $U$. We adopt the notation 
$$X_{i}I(x):=\left(b_{i1}(x),\ldots,b_{in}(x)\right)^{\top}, \enspace x\in U$$ 
for $i=1,\ldots,m$. Given a multi-index $J=\left(j_1, \ldots, j_s\right) \in\{1, \ldots, m\}^s$ with $s\in \mathbb{N}$, we set
\begin{equation*}
X_{J}:=\left[X_{j_1}, \ldots\left[X_{j_{s-1}}, X_{j_s}\right] \ldots\right].
\end{equation*}
We call $X_{J}$  a commutator  of   length  $s$. If there exists a smallest $s\in \mathbb{N}$ such that the vectors
$\left\{X_{J}I(x)\right\}_{|J| \leq s}$ span $\mathbb{R}^{n}$ at each $x\in U$, then we say that   $X_{1}, \ldots, X_{m}$ satisfy the H\"ormander condition at step $s$. 

Throughout the paper we assume that   $X_{1}, \ldots, X_{m}$ satisfy the H\"ormander condition at step $s$. In the literature,  $X_{1}, \ldots, X_{m}$  are called the H\"ormander vector fields.

Now let $\Omega \Subset U$ be an open connected subset.  Let us recall the notion of control distance associated with  $X_{1}, \ldots, X_{m}$.
\begin{definition}
Given $\delta>0$, let $C_{1}(\delta)$ denote the class of absolutely continuous curves $\varphi:[0,1] \rightarrow \Omega$ satisfying
$$
\varphi^{\prime}(t)=\sum_{i=1}^{m} a_{i}(t)X_{i}\left({\varphi(t)}\right)\quad  \text {a.e. in } [0,1],
$$
where $a_{i}$ are measurable functions with upper bounds
$$
\left|a_{i}(\cdot)\right| \leq \delta \quad \text {a.e. in } [0,1] 
$$
for $i=1, \ldots, m$. Then for any two points $x,y\in \Omega$ we define the control distance by
\begin{equation*}
d_{X}(x, y):=\inf \left\{\delta>0:\enspace \exists \varphi \in C_{1}(\delta) \text { with } \varphi(0)=x \text{ and } \varphi(1)=y\right\}.
\end{equation*}
\end{definition} 

\begin{remark}
For simplicity, we adopt the notation $d_{X}$, although it also depends on the domain $\Omega$,  see \cite[Remark 1.30]{bramanti2023hormander}.
\end{remark}
Since $d_{X}$ is finite in $\Omega$ (see e.g. \cite[Theorem 1.45]{bramanti2023hormander}), it follows that $(\Omega, d_{X})$ is a metric space, known as the Carnot-Carath\'eodory space. Therefore,  $d_{X}$ is also called the Carnot-Carath\'eodory metric. Then an open metric ball centred at $x\in\Omega$ with the radius $r>0$ in $(\Omega, d_{X})$ is defined as follows
$$
B_{X}(x, r):=\left\{y \in \Omega: \enspace d_{X}(x, y)<r\right\}.
$$

Now we introduce the local homogeneous dimension $Q$ which plays a  role to prove, for example, H\"older continuity of eigenfunctions, see Theorem \ref{Holder continuity}. Let $X^{(k)}$ denote the collection of all commutators of  $X_{1}, \ldots, X_{m}$ of length $k$, that is,
$$
X^{(k)}:=\left\{X_{J} :\enspace J=\left(j_1, \ldots, j_k\right)\in\{1, \ldots, m\}^{k}, \enspace |J|=k \right\},
$$
where $k\in \mathbb{N}$ with $k\leq s$. Among the components of $X^{(1)},\ldots,X^{(s)}$, we select ones which are necessary to span $\mathbb{R}^{n}$ and label them with $Y_{1},\ldots,Y_{l}$. If $Y_{i}\in X^{(k)}$, then we assign a formal degree $\operatorname{d}(Y_i)=k$ and set
$$
\operatorname{d}(I):=\sum_{k=1}^n \operatorname{d}\left(Y_{i_k}\right) \quad \text { and } \quad a_{I}(x):=\det\left(Y_{i_1}, \ldots, Y_{i_n}\right)(x)
$$
for  multi-indices $I=\left(i_1, \ldots, i_n\right) \in\{1, \ldots, l\}^{n}$. Finally, by introducing the Nagel-Stein-Wainger polynomial at $x\in\Omega$ (see \cite{NSW1985})
$$
\Lambda(x, r):=\sum_{I}\left|a_{I}(x)\right| r^{\operatorname{d}(I)} \quad \text{for } r>0,
$$
now we are ready to define the local homogeneous dimension.
\begin{definition}\cite[p. 1771]{capogna1993embedding}
Given $x\in\Omega$, let  
\begin{equation*}
Q(x):=\lim _{r \rightarrow 0+} \frac{\log \Lambda(x, r)}{\log r}.
\end{equation*}
The number $Q(x)$ is called the pointwise homogeneous dimension relative to $\Omega$. 
\end{definition}

\begin{definition}\cite[p. 1771]{capogna1993embedding}
Let 
\begin{equation*}
Q:=\sup _{x \in \Omega} Q(x).
\end{equation*}   
The number $Q$ is called the local homogeneous dimension relative to $\Omega$.
\end{definition}
\begin{remark}
It is clear that $n\leq Q(x)\leq Q$. Moreover, the local homogeneous dimension $Q$ coincides with the generalized M\'etivier index $\Tilde{\nu}$, see \cite[Proposition 2.2 and Definition 1.2]{Chen2019}.
\end{remark}

Next we  give the definition of the Sobolev space $\mathcal{W}_{X}^{1, p}(\Omega)$ associated with  $X_{1}, \ldots, X_{m}$. Let us start with the notion of weak derivatives with respect to H\"ormander vector fields. A function $f\in L^{1}_{loc}(\Omega)$ is differentiable in the weak sense with respect to $X_{i}$ if there exists a function $g\in L^{1}_{loc}(\Omega)$ such that
\begin{equation*}
\int_{\Omega} g(x)  \varphi(x) d x=\int_{\Omega} f(x) X_{i}^{*} \varphi(x) d x\quad \text{ for all } \varphi \in C_{0}^{\infty}(\Omega),
\end{equation*}
where $X_{i}^{*}$ is the formal adjoint of $X_{i}$, which is given by
$$
X_{i}^{*}\varphi(x)=-\sum_{k=1}^n \partial_{x_{k}}\left(b_{ik} \varphi(x)\right).
$$
For $f\in L^{1}_{loc}(\Omega)$ we denote $X_{i}f=g$. Then the Sobolev space associated with $X_{1}, \ldots, X_{m}$ is defined by
\begin{equation*}
\mathcal{W}_{X}^{1, p}(\Omega):=\left\{f \in L^{p}(\Omega):\enspace X_{i} f \in L^{p}(\Omega)\quad \text{for } i=1, \ldots, m \right\}
\end{equation*}
with the norm
\begin{equation}\label{natural norm}
\|u\|_{\mathcal{W}_{X}^{1, p}(\Omega)}=\left(\int_{\Omega}\left(|u|^p+|X u|^p\right) d x\right)^{\frac{1}{p}},
\end{equation}
where $X:=\left(X_{1}, \ldots, X_{m}\right)$ is called the horizontal gradient and its length is given by
$$|X f|=\left(\sum_{i=1}^{m}\left(X_{i} f\right)^{2}\right)^{\frac{1}{2}}.$$
Then we introduce the trace zero Sobolev space $\mathcal{W}_{X,0}^{1, p}(\Omega)$ which is the completion of $C_{0}^{\infty}(\Omega)$ in $\mathcal{W}_{X}^{1, p}(\Omega)$. 
Let us recall some known results concerning $\mathcal{W}_{X,0}^{1, p}(\Omega)$. Since $\mathcal{W}_{X,0}^{1, p}(\Omega)$ is a  reflexive Banach space (see e.g. \cite[Theorem 1]{xu1990subelliptic}), then every bounded sequence in $\mathcal{W}_{X,0}^{1, p}(\Omega)$ has a weakly convergent subsequence by the Eberlein-\v{S}mulian theorem. Hence, we need to characterize weak convergence in $\mathcal{W}_{X,0}^{1, p}(\Omega)$.  When $p=2$, we refer to \cite[Proposition 2.6]{bramanti2023hormander}. When $p\neq 2$, we have the following proposition.
\begin{proposition}\label{Weak convergence characterization}
A sequence $v_n$ converges weakly to $v$ in $\mathcal{W}_{X,0}^{1, p}(\Omega)$ if and only if there exist $g_{i}\in L^{p}(\Omega)$ such that 
$$v_n\rightharpoonup v \enspace\text{ weakly in } L^{p}(\Omega) \quad \text{ and } \quad X_{i}v_n\rightharpoonup g_{i}\enspace \text{ weakly in } L^{p}(\Omega)$$ 
as $n\to\infty$, $i=1,\ldots,m$. In this case, $g_{i}=X_{i}v$.
\end{proposition}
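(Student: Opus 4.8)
The plan is to transport the question through the canonical isometric embedding of $\mathcal{W}_{X}^{1, p}(\Omega)$ into a product of Lebesgue spaces, where the dual is completely explicit, and then to recover the vanishing-trace information by a convexity argument. First I would set up the embedding $T\colon \mathcal{W}_{X}^{1, p}(\Omega)\to L^{p}(\Omega)^{m+1}$, $Tu=(u,X_{1}u,\dots,X_{m}u)$, and equip $L^{p}(\Omega)^{m+1}$ with the norm
\[
\|(f_{0},\dots,f_{m})\|:=\left(\int_{\Omega}\Bigl(|f_{0}|^{p}+\Bigl(\sum_{i=1}^{m}f_{i}^{2}\Bigr)^{p/2}\Bigr)\,dx\right)^{1/p}.
\]
By \eqref{natural norm}, $T$ is a linear isometry, and since $\mathcal{W}_{X}^{1, p}(\Omega)$ is complete its range is a closed subspace. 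Because $(L^{p}(\Omega)^{m+1})^{*}=L^{p'}(\Omega)^{m+1}$ with $p'=p/(p-1)$, the Hahn--Banach theorem shows that every bounded linear functional on $\mathcal{W}_{X}^{1, p}(\Omega)$, and after extension on $\mathcal{W}_{X,0}^{1, p}(\Omega)$, has the form $u\mapsto \int_{\Omega}u\phi_{0}\,dx+\sum_{i=1}^{m}\int_{\Omega}(X_{i}u)\phi_{i}\,dx$ for some $\phi_{0},\dots,\phi_{m}\in L^{p'}(\Omega)$.

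For the forward implication, assume $v_{n}\rightharpoonup v$ in $\mathcal{W}_{X,0}^{1, p}(\Omega)$. For fixed $\phi\in L^{p'}(\Omega)$, H\"older's inequality together with $|X_{i}u|\le|Xu|$ shows that $u\mapsto\int_{\Omega}u\phi\,dx$ and $u\mapsto\int_{\Omega}(X_{i}u)\phi\,dx$ are bounded linear functionals on $\mathcal{W}_{X,0}^{1, p}(\Omega)$; testing the weak convergence against them gives $v_{n}\rightharpoonup v$ and $X_{i}v_{n}\rightharpoonup X_{i}v$ in $L^{p}(\Omega)$, so one may take $g_{i}=X_{i}v$, weak limits in $L^{p}(\Omega)$ being unique.

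Conversely, suppose $v_{n}\rightharpoonup v$ and $X_{i}v_{n}\rightharpoonup g_{i}$ in $L^{p}(\Omega)$. I would first identify $g_{i}$: since $v_{n}\in\mathcal{W}_{X,0}^{1, p}(\Omega)\subset\mathcal{W}_{X}^{1, p}(\Omega)$, the definition of the weak derivative gives $\int_{\Omega}(X_{i}v_{n})\varphi\,dx=\int_{\Omega}v_{n}\,X_{i}^{*}\varphi\,dx$ for every $\varphi\in C_{0}^{\infty}(\Omega)$; letting $n\to\infty$ and using $X_{i}^{*}\varphi\in C_{0}^{\infty}(\Omega)\subset L^{p'}(\Omega)$, we obtain $\int_{\Omega}g_{i}\varphi\,dx=\int_{\Omega}v\,X_{i}^{*}\varphi\,dx$, i.e. $g_{i}=X_{i}v$ in the weak sense and in particular $v\in\mathcal{W}_{X}^{1, p}(\Omega)$. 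Using the representation of dual elements from the first step, $F(v_{n})\to F(v)$ for every $F\in(\mathcal{W}_{X}^{1, p}(\Omega))^{*}$, so $v_{n}\rightharpoonup v$ in $\mathcal{W}_{X}^{1, p}(\Omega)$. Since $\mathcal{W}_{X,0}^{1, p}(\Omega)$ is a norm-closed convex subspace of $\mathcal{W}_{X}^{1, p}(\Omega)$, it is weakly closed by Mazur's lemma, and $v_{n}\in\mathcal{W}_{X,0}^{1, p}(\Omega)$ forces $v\in\mathcal{W}_{X,0}^{1, p}(\Omega)$. Finally, any $G\in(\mathcal{W}_{X,0}^{1, p}(\Omega))^{*}$ extends by Hahn--Banach to an element of $(\mathcal{W}_{X}^{1, p}(\Omega))^{*}$, whence $G(v_{n})\to G(v)$, which is precisely $v_{n}\rightharpoonup v$ in $\mathcal{W}_{X,0}^{1, p}(\Omega)$.

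The only point that requires genuine care, and thus the main (mild) obstacle, is that knowing the $L^{p}$-weak limits $v$ and $g_{i}$ alone does not immediately place $v$ in the trace-zero space $\mathcal{W}_{X,0}^{1, p}(\Omega)$ rather than merely in $\mathcal{W}_{X}^{1, p}(\Omega)$; this is resolved by the weak closedness of the convex set $\mathcal{W}_{X,0}^{1, p}(\Omega)$. Everything else is a routine transfer of the duality of $L^{p}$-spaces through the isometry $T$, combined with the adjoint formula defining $X_{i}$ in the weak sense.
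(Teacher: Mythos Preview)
Your argument is correct. The paper itself does not give a self-contained proof but simply cites standard references (Le Dret, Leoni, and Capogna et al.); the route you take---the isometric embedding $T$ into the product $L^{p}(\Omega)^{m+1}$, the Hahn--Banach representation of the dual, the identification $g_{i}=X_{i}v$ via the adjoint formula, and Mazur's lemma to secure $v\in\mathcal{W}_{X,0}^{1,p}(\Omega)$---is precisely the standard argument those references contain, so there is no substantive difference in approach.
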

\begin{proof}
The proof is standard, we refer to \cite[Proposition 1.8]{LeDret2018}, see also \cite[Corollary 11.70]{Leoni2017Sobolev} and \cite[Proposition 2.2]{capogna2024asymptotic}.
\end{proof}

\begin{theorem}\cite[Theorem 2.5]{capogna2024asymptotic}  (Poincar\'e-Friedrichs inequality). There exists a  constant $C>0$ such that 
\begin{equation}\label{Poincare inequality}
\int_{\Omega}|u|^{p} d x\leq C \int_{\Omega}|X u|^{p} d x \qquad \text{for all }u\in \mathcal{W}_{X,0}^{1, p}(\Omega).
\end{equation}
Here the constant $C$ is independent of $u$. See also \cite[Proposition 2.6]{chen2024sharp}.
\end{theorem}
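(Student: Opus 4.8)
The plan is to establish \eqref{Poincare inequality} by a compactness--contradiction argument, resting on the reflexivity of $\mathcal{W}_{X,0}^{1,p}(\Omega)$, the weak-convergence characterization of Proposition~\ref{Weak convergence characterization}, and the Rellich--Kondrachov type compact embedding $\mathcal{W}_{X,0}^{1,p}(\Omega) \hookrightarrow\hookrightarrow L^{p}(\Omega)$ available for H\"ormander vector fields on the bounded set $\Omega$. Assume the inequality fails; then for every $n\in\mathbb{N}$ there is $u_{n}\in\mathcal{W}_{X,0}^{1,p}(\Omega)$ with $\int_{\Omega}|u_{n}|^{p}\,dx=1$ and $\int_{\Omega}|Xu_{n}|^{p}\,dx<1/n$, so by \eqref{natural norm} the sequence $(u_{n})$ is bounded in $\mathcal{W}_{X,0}^{1,p}(\Omega)$.

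By reflexivity and the Eberlein--\v{S}mulian theorem a subsequence converges weakly in $\mathcal{W}_{X,0}^{1,p}(\Omega)$ to some $u$, and after a further extraction $u_{n}\to u$ strongly in $L^{p}(\Omega)$ by the compact embedding, whence $\int_{\Omega}|u|^{p}\,dx=1$. Proposition~\ref{Weak convergence characterization} gives $X_{i}u_{n}\rightharpoonup X_{i}u$ weakly in $L^{p}(\Omega)$ for each $i$; since $v\mapsto\int_{\Omega}|v|^{p}\,dx$ is convex and continuous on $L^{p}(\Omega)^{m}$, hence weakly lower semicontinuous, one obtains
\begin{equation*}
\int_{\Omega}|Xu|^{p}\,dx\le\liminf_{n\to\infty}\int_{\Omega}|Xu_{n}|^{p}\,dx=0 ,
\end{equation*}
so $X_{i}u=0$ a.e. in $\Omega$ for all $i$, while $\|u\|_{L^{p}(\Omega)}=1$. (In fact $\|u_{n}-u\|_{\mathcal{W}_{X,0}^{1,p}(\Omega)}\to 0$, confirming strong convergence.)

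The remaining, and decisive, step is to show that this forces $u\equiv 0$; here I would argue as follows. Fix a connected open set $\Omega'$ with $\Omega\Subset\Omega'\Subset U$ and extend $u$ by zero to $\Omega'$; approximating $u$ in $\mathcal{W}_{X}^{1,p}(\Omega)$ by functions in $C_{0}^{\infty}(\Omega)$ shows that the extension $\tilde u$ belongs to $\mathcal{W}_{X,0}^{1,p}(\Omega')$ with $X_{i}\tilde u=0$ a.e. on all of $\Omega'$ and $\tilde u\equiv 0$ on the nonempty open set $\Omega'\setminus\overline{\Omega}$. Then $\tilde u$ is a weak solution of the sum-of-squares equation $\sum_{i=1}^{m}X_{i}^{*}X_{i}\tilde u=0$ on $\Omega'$, so H\"ormander's hypoellipticity theorem yields $\tilde u\in C^{\infty}(\Omega')$; being smooth with vanishing horizontal gradient, $\tilde u$ is constant along every horizontal curve, and since the H\"ormander condition makes $\Omega'$ horizontally path-connected (Chow--Rashevskii), $\tilde u$ is constant on $\Omega'$. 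Because $\tilde u$ vanishes on $\Omega'\setminus\overline{\Omega}$, it vanishes identically, hence $u\equiv 0$, contradicting $\|u\|_{L^{p}(\Omega)}=1$. This proves \eqref{Poincare inequality}.

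I expect this last step to be the main obstacle: in contrast to the Euclidean case, one cannot integrate along coordinate directions to deduce that a trace-zero function with vanishing horizontal gradient is zero, and one must invoke hypoellipticity of $\sum_{i=1}^{m}X_{i}^{*}X_{i}$ together with Chow--Rashevskii connectivity. An alternative that avoids the compact embedding altogether is to extend $u\in\mathcal{W}_{X,0}^{1,p}(\Omega)$ by zero to a large Carnot--Carath\'eodory ball $B\supset\Omega$, apply Jerison's local Poincar\'e inequality for H\"ormander vector fields on $B$, and absorb the resulting average term using that $|\Omega|/|B|$ can be made arbitrarily small; this route also yields an explicit (non-sharp) value of the constant $C$.
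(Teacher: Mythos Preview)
The paper does not prove this theorem: it is quoted directly from \cite[Theorem 2.5]{capogna2024asymptotic}, with a secondary reference to \cite[Proposition 2.6]{chen2024sharp}, and no argument is supplied. Your compactness--contradiction proof is therefore additional content rather than a reproduction of anything in the paper.

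Your argument is sound. The ingredients you invoke---reflexivity of $\mathcal{W}_{X,0}^{1,p}(\Omega)$, Proposition~\ref{Weak convergence characterization}, and the compact embedding of Theorem~\ref{compact Sobolev embedding}---are all stated in the paper or the cited literature independently of \eqref{Poincare inequality}, so there is no circularity. The step you correctly flag as decisive, namely showing that $u\in\mathcal{W}_{X,0}^{1,p}(\Omega)$ with $Xu=0$ a.e.\ must vanish, is handled properly: approximation by $C_{0}^{\infty}(\Omega)$ functions shows that the zero extension $\tilde u$ lies in $\mathcal{W}_{X,0}^{1,p}(\Omega')$ with $X\tilde u=0$ a.e.\ there, hypoellipticity of $\sum_{i}X_{i}^{*}X_{i}$ gives $\tilde u\in C^{\infty}(\Omega')$, and Chow--Rashevskii connectivity on the connected $\Omega'$ forces $\tilde u$ to be constant, hence identically zero since it vanishes on the nonempty open set $\Omega'\setminus\overline{\Omega}$. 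The alternative route you sketch, via Jerison's local Poincar\'e inequality on Carnot--Carath\'eodory balls, is in fact closer in spirit to how the cited sources derive \eqref{Poincare inequality}.
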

In view of the Poincar\'e-Friedrichs inequality \eqref{Poincare inequality}, the space $\mathcal{W}_{X,0}^{1, p}(\Omega)$ can be equipped with the equivalent norm
\begin{equation}\label{Poincare norm}
\|u\|_{\mathcal{W}_{X,0}^{1, p}(\Omega)}:=\left(\int_{\Omega}\left|X u\right|^{p} d x\right)^{\frac{1}{p}}.
\end{equation}
Unless otherwise stated, we use the norm \eqref{Poincare norm} for $\mathcal{W}_{X,0}^{1, p}(\Omega)$. 

\begin{theorem}\label{compact Sobolev embedding}\cite[Corollary 3.3]{Danielli1991}
Let $1\leq p<\infty$. Then  $\mathcal{W}_{X,0}^{1, p}(\Omega)\hookrightarrow\hookrightarrow L^{p}(\Omega)$.
\end{theorem}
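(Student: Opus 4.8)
The plan is to deduce the compactness from two deep structural facts about the Carnot--Carath\'eodory geometry --- the Nagel--Stein--Wainger volume estimates and Jerison's Poincar\'e inequality --- after which the statement follows by soft functional analysis together with a localization.

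\emph{Step 1 (a local compact embedding).} Fix $x_{0}\in\Omega$ and a sufficiently small $\rho>0$ such that the dilated ball $B_{X}(x_{0},\lambda\rho)\Subset U$, where $\lambda\ge1$ is Jerison's dilation constant. On $B:=B_{X}(x_{0},\rho)$ two estimates are available: the local doubling property $|B_{X}(x,2r)|\le C_{0}\,|B_{X}(x,r)|$ for balls with $B_{X}(x,2r)\subset B_{X}(x_{0},\lambda\rho)$ (Nagel--Stein--Wainger \cite{NSW1985}), and Jerison's weak $(1,p)$-Poincar\'e inequality
\[
\frac{1}{|B_{X}(x,r)|}\int_{B_{X}(x,r)}\bigl|u-u_{B_{X}(x,r)}\bigr|\,dy\ \le\ C_{1}\,r\left(\frac{1}{|B_{X}(x,\lambda r)|}\int_{B_{X}(x,\lambda r)}|Xu|^{p}\,dy\right)^{1/p},
\]
valid for $u\in\mathcal{W}_{X}^{1,p}(B_{X}(x_{0},\lambda\rho))$ whenever $B_{X}(x,\lambda r)\subset B_{X}(x_{0},\lambda\rho)$, with $u_{B}$ the average of $u$ over $B$. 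A doubling measure supporting such a Poincar\'e inequality satisfies the abstract Rellich--Kondrachov theorem (self-improve the Poincar\'e inequality to a Sobolev--Poincar\'e inequality with a gain of integrability, then run the standard covering/equi-integrability argument), which gives $\mathcal{W}_{X}^{1,p}(B)\hookrightarrow\hookrightarrow L^{p}(B)$; in the range $p>Q$ one may instead use the H\"older bound behind Theorem \ref{Holder continuity} and Arzel\`a--Ascoli.

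\emph{Step 2 (globalization).} If $\Omega\Subset\Omega'\Subset U$, then the zero-extension of any $u\in\mathcal{W}_{X,0}^{1,p}(\Omega)$ lies in $\mathcal{W}_{X,0}^{1,p}(\Omega')$ with the same norm --- approximate $u$ by $C_{0}^{\infty}(\Omega)$ functions, extend those by zero, and pass to the limit. So, given $\{u_{k}\}$ bounded in $\mathcal{W}_{X,0}^{1,p}(\Omega)$, I regard the $u_{k}$ as elements of $\mathcal{W}_{X,0}^{1,p}(\Omega')$ supported in $\bar\Omega$, with $\|u_{k}\|_{\mathcal{W}_{X}^{1,p}(\Omega')}\le M$. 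Cover the compact set $\bar\Omega$ by finitely many balls $B_{1},\dots,B_{N}$ of the type in Step 1 (so that their $\lambda$-dilates stay inside $\Omega'$) and pick $\eta_{j}\in C_{0}^{\infty}(\Omega')$ with $\operatorname{supp}\eta_{j}\subset B_{j}$ and $\sum_{j}\eta_{j}\equiv1$ on a neighbourhood of $\bar\Omega$. By the Leibniz rule $X_{i}(\eta_{j}u_{k})=\eta_{j}X_{i}u_{k}+(X_{i}\eta_{j})u_{k}$ and the boundedness of $\eta_{j}$ and $X_{i}\eta_{j}$, each $\{\eta_{j}u_{k}\}_{k}$ is bounded in $\mathcal{W}_{X}^{1,p}(B_{j})$; Step 1 and a diagonal extraction produce one subsequence along which $\eta_{j}u_{k}\to v_{j}$ in $L^{p}(B_{j})$ for every $j$, whence $u_{k}=\sum_{j}\eta_{j}u_{k}\to\sum_{j}v_{j}$ strongly in $L^{p}(\Omega')$; restricting to $\Omega$ proves the theorem.

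\emph{Main obstacle.} The analytic substance lies entirely in Step 1, and specifically in Jerison's Poincar\'e inequality: its proof rests on the subunit-curve description of $d_{X}$ and on the Nagel--Stein--Wainger ball estimates, which are the genuinely nontrivial input. Once the doubling condition and the Poincar\'e inequality are in hand, the compact embedding is a formal consequence; the remaining care is purely bookkeeping --- keeping the dilated balls $B_{X}(x,\lambda r)$ inside $U$, and absorbing the harmless lower-order term $(X_{i}\eta_{j})u_{k}\in L^{p}$ generated by the cut-off.
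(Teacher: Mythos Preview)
The paper does not give a proof of this statement at all; it simply quotes the result from Danielli's 1991 paper. So there is no ``paper's own proof'' to compare against, and your sketch is supplying an argument the authors chose to outsource.

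Your outline is correct and is by now the standard modern route: Nagel--Stein--Wainger doubling plus Jerison's Poincar\'e feed into the abstract Rellich--Kondrachov machinery for metric measure spaces (Haj\l asz--Koskela), and a partition of unity globalizes. One small wrinkle worth tightening: in Step~1 you assert $\mathcal{W}_{X}^{1,p}(B)\hookrightarrow\hookrightarrow L^{p}(B)$ without a zero-trace condition, which in general requires knowing that CC balls are, say, John domains. You do not actually need this strength, because the functions $\eta_{j}u_{k}$ in Step~2 are compactly supported in $B_{j}$; the abstract theorem already gives precompactness in $L^{p}_{\mathrm{loc}}(B_{j})$ (equivalently, $\mathcal{W}_{X,0}^{1,p}(B_{j})\hookrightarrow\hookrightarrow L^{p}(B_{j})$), and that is enough. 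Danielli's original 1991 argument predates the metric-space framework and proceeds instead through subelliptic representation formulas and fractional-integral estimates leading to a Kolmogorov--Riesz-type compactness criterion; your approach is more conceptual and nowadays more common, at the cost of importing heavier abstract machinery.
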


\begin{theorem}\label{continuous Sobolev embedding for 1<p<Q}

Let  $1< p <Q$. Then  $\mathcal{W}_{X,0}^{1, p}(\Omega)\hookrightarrow L^{q}(\Omega)$   for every $1\leq q \leq \frac{Qp}{Q-p}.$  
\end{theorem}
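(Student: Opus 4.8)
The plan is to deduce the embedding from the sharp subelliptic Sobolev inequality
\[
\|u\|_{L^{p^{*}}(\Omega)}\le C\,\|Xu\|_{L^{p}(\Omega)},\qquad p^{*}:=\frac{Qp}{Q-p},
\]
valid for every $u\in C_{0}^{\infty}(\Omega)$, and then to pass to the completion and interpolate down to the smaller exponents $1\le q\le p^{*}$.

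\emph{Step 1: the Sobolev inequality for test functions.} The homogeneous dimension $Q$ is precisely the exponent that replaces the Euclidean dimension here because, by the Nagel--Stein--Wainger volume estimate \cite{NSW1985}, the control balls satisfy $|B_{X}(x,r)|\approx\Lambda(x,r)$ and hence enjoy the doubling property $|B_{X}(x,2r)|\le C\,|B_{X}(x,r)|$ uniformly for $x\in\Omega$ and small $r$; together with the $L^{1}$ Poincar\'e inequality on balls, $\int_{B_{X}(x,r)}|u-u_{B_{X}(x,r)}|\,dy\le C\,r\int_{B_{X}(x,2r)}|Xu|\,dy$, these two ingredients drive the standard potential-theoretic argument: one represents $u$ (up to its average) by a Riesz-type potential of $|Xu|$ with kernel comparable to $d_{X}(x,y)/|B_{X}(x,d_{X}(x,y))|$ and then applies a fractional-integration/maximal-function estimate (equivalently, the truncation iteration of Haj\l asz and Koskela) to obtain the local inequality
\[
\Big(\tfrac{1}{|B_{X}(x,r)|}\int_{B_{X}(x,r)}|u|^{p^{*}}\,dy\Big)^{1/p^{*}}\le C\,r\Big(\tfrac{1}{|B_{X}(x,r)|}\int_{B_{X}(x,2r)}|Xu|^{p}\,dy\Big)^{1/p}
\]
for $u\in C_{0}^{\infty}(B_{X}(x,r))$. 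This is exactly the content of the embedding theorem of Capogna--Danielli--Garofalo \cite{capogna1993embedding}; covering $\overline\Omega$ by finitely many such balls $B_{X}(x_{i},r_{i})\Subset U$ (possible since $\Omega\Subset U$ and $\overline\Omega$ is compact, the metric $d_{X}$ being topologically equivalent to the Euclidean one) and summing via a subordinate partition of unity upgrades it to the global inequality on $\Omega$ displayed above.

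\emph{Steps 2 and 3: passage to the Sobolev space and interpolation.} Given $u\in\mathcal{W}_{X,0}^{1,p}(\Omega)$, choose $u_{k}\in C_{0}^{\infty}(\Omega)$ with $u_{k}\to u$ in $\mathcal{W}_{X}^{1,p}(\Omega)$; applying the Sobolev inequality to $u_{k}-u_{j}$ shows $(u_{k})$ is Cauchy in $L^{p^{*}}(\Omega)$, hence converges there to some $v$, and since also $u_{k}\to u$ in $L^{p}(\Omega)$ we get $v=u$ a.e., so $u\in L^{p^{*}}(\Omega)$ and $\|u\|_{L^{p^{*}}(\Omega)}\le C\|Xu\|_{L^{p}(\Omega)}$ persists. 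Finally, since $\Omega$ is bounded, $|\Omega|<\infty$, so H\"older's inequality gives $\|u\|_{L^{q}(\Omega)}\le|\Omega|^{\frac1q-\frac1{p^{*}}}\|u\|_{L^{p^{*}}(\Omega)}$ for every $1\le q\le p^{*}$; combining this with the previous estimate and the Poincar\'e--Friedrichs inequality \eqref{Poincare inequality} (which controls $\|u\|_{L^{p}(\Omega)}$, hence the full norm \eqref{natural norm}, by $\|Xu\|_{L^{p}(\Omega)}$) yields $\|u\|_{L^{q}(\Omega)}\le C\|u\|_{\mathcal{W}_{X}^{1,p}(\Omega)}$, i.e. $\mathcal{W}_{X,0}^{1,p}(\Omega)\hookrightarrow L^{q}(\Omega)$.

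The main obstacle is Step 1. Unlike the Euclidean case, there is no elementary proof of the subelliptic Sobolev inequality via the fundamental theorem of calculus along coordinate lines; one genuinely needs the geometric input (doubling of control balls and the Poincar\'e inequality on balls) together with the potential estimate, which is what produces the correct gain of integrability encoded by $Q$. Once that inequality is available, the density argument of Step 2 and the H\"older interpolation of Step 3 are entirely routine.
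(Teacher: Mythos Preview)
Your proposal is correct and follows essentially the same route as the paper: the paper's proof is the single line ``\cite[Theorem 2.3]{capogna1993embedding} and partition of the unity yield the result,'' and your Step~1 is precisely an unpacking of that citation (local subelliptic Sobolev inequality from Capogna--Danielli--Garofalo, globalized via a finite cover and partition of unity), with Steps~2--3 being the routine density and H\"older interpolation that the paper leaves implicit.
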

\begin{proof}
\cite[Theorem 2.3]{capogna1993embedding} and partition of the unity yield the result. 
\end{proof}
\begin{definition}
Let $0<\alpha<1$. A function $f:\Omega\to \mathbb{R}$ is said to be $\alpha$-H\"older continuous with respect to the control distance $d_{X}$, if
\begin{equation*}
\sup_{\stackrel{x,y\in \Omega}{x\neq y}}\frac{|f(x)-f(y)|}{d_{X}(x,y)^{\alpha}}<\infty.   
\end{equation*}
Such functions form the H\"older class $\mathcal{C}^{0,\alpha}_{X}(\Omega)$ associated with $X_{1}, \ldots, X_{m}$.
\end{definition}

\begin{theorem}\label{continuous Sobolev embedding for p>Q}
Let $p\geq Q$. Then $\mathcal{W}_{X,0}^{1, p}(\Omega)\hookrightarrow L^{q}(\Omega)$   for every $1\leq q < \infty$  
\end{theorem}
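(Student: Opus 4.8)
The plan is to reduce the case $p\ge Q$ to the subcritical range $1<\tilde p<Q$ already settled by Theorem~\ref{continuous Sobolev embedding for 1<p<Q}, exploiting two features: the boundedness of $\Omega$, and the fact that the Sobolev exponent $Q\tilde p/(Q-\tilde p)$ blows up to $+\infty$ as $\tilde p\uparrow Q$. So there is really no new analytic input needed beyond the subcritical embedding.

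First I would record the elementary step. Since $\Omega$ is bounded, Hölder's inequality gives $\|v\|_{L^{r}(\Omega)}\le|\Omega|^{1/r-1/p}\|v\|_{L^{p}(\Omega)}$, hence $L^{p}(\Omega)\hookrightarrow L^{r}(\Omega)$ for every $1\le r\le p$. From this I would deduce that $\mathcal{W}_{X,0}^{1,p}(\Omega)\hookrightarrow\mathcal{W}_{X,0}^{1,\tilde p}(\Omega)$ for every $1<\tilde p\le p$: if $u\in\mathcal{W}_{X,0}^{1,p}(\Omega)$, then $u$ and each $X_{i}u$ lie in $L^{p}(\Omega)\subset L^{\tilde p}(\Omega)$, the $\mathcal{W}_{X}^{1,\tilde p}$-norm of $u$ is controlled by its $\mathcal{W}_{X}^{1,p}$-norm, and since $u$ is a limit in $\mathcal{W}_{X}^{1,p}(\Omega)$ of functions from $C_{0}^{\infty}(\Omega)$ it is a fortiori such a limit in $\mathcal{W}_{X}^{1,\tilde p}(\Omega)$, so it belongs to the completion $\mathcal{W}_{X,0}^{1,\tilde p}(\Omega)$.

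Now fix $q\in[1,\infty)$. If $q\le p$, then by the definition of the norm of $\mathcal{W}_{X,0}^{1,p}(\Omega)$ and the inclusion $L^{p}(\Omega)\hookrightarrow L^{q}(\Omega)$ we are done. If $q>p$, I would set $\tilde p:=\frac{qQ}{Q+q}$; a direct computation gives $\frac{Q\tilde p}{Q-\tilde p}=q$, and one checks $1<\tilde p<Q\le p$, since $\tilde p<Q$ amounts to $Q>0$ while $\tilde p>1$ amounts to $q>\frac{Q}{Q-1}$, which holds because $q>p\ge Q\ge n\ge 2$ and $Q\ge\frac{Q}{Q-1}$. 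Then the inclusion from the previous paragraph followed by Theorem~\ref{continuous Sobolev embedding for 1<p<Q} applied with exponent $\tilde p$ yields
\[
\mathcal{W}_{X,0}^{1,p}(\Omega)\hookrightarrow\mathcal{W}_{X,0}^{1,\tilde p}(\Omega)\hookrightarrow L^{Q\tilde p/(Q-\tilde p)}(\Omega)=L^{q}(\Omega),
\]
which is the claim.

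I do not expect a genuine obstacle here; the only points deserving care are checking that the chosen $\tilde p$ stays strictly between $1$ and $Q$ (the borderline configuration $p=Q=2$ is harmlessly absorbed into the trivial case $q\le p$), and noting, as above, that passing to the weaker exponent $\tilde p$ does not destroy membership in the trace-zero space $\mathcal{W}_{X,0}^{1,\tilde p}(\Omega)$. An alternative would be a Moser-type iteration, but the reduction just described is both shorter and entirely self-contained given the results already available in Section~\ref{sec2}.
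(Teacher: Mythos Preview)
Your argument is correct and takes a genuinely different route from the paper's. The paper does not argue at all; it simply cites external results: for $p=Q$ it invokes \cite[Remark~1.1]{chen2024sharp} directly, and for $p>Q$ it passes through the H\"older embedding $\mathcal{W}_{X,0}^{1,p}(\Omega)\hookrightarrow\mathcal{C}^{0,\alpha}_X(\overline\Omega)$ from the same reference, then compares $\mathcal{C}^{0,\alpha}_X$ with the Euclidean H\"older class via \cite[Proposition~2.14]{bramanti2023hormander}, which on the bounded set $\Omega$ gives $L^\infty$ and hence every $L^q$. Your reduction, by contrast, is self-contained within the paper: lowering the exponent to $\tilde p=qQ/(Q+q)\in(1,Q)$ and applying Theorem~\ref{continuous Sobolev embedding for 1<p<Q} requires no new external input and is more elementary. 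The trade-off is that the paper's route, for $p>Q$, actually delivers the stronger H\"older (hence $L^\infty$) conclusion that gets reused later in Lemma~\ref{L infty boundedness} and Theorem~\ref{Holder continuity}, whereas your route yields exactly the stated $L^q$ embedding and nothing more. Your remark about the ``borderline configuration $p=Q=2$'' is a little confusing---the check $\tilde p>1$ goes through uniformly once $q>p\ge Q\ge 2$, so no special case really arises---but this does not affect correctness.
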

\begin{proof}
Let $p=Q$. Then  $\mathcal{W}_{X,0}^{1, p}(\Omega)\hookrightarrow L^{q}(\Omega)$   for every $1\leq q < \infty$ by \cite[Remark 1.1]{chen2024sharp}. Now let $p>Q$. By \cite[Remark 1.1]{chen2024sharp} we have $\mathcal{W}_{X,0}^{1, p}(\Omega)\hookrightarrow \mathcal{C}^{0, \alpha}_{X}(\overline{\Omega})$ provided $0<\alpha<1$. By \cite[Proposition 2.14]{bramanti2023hormander} we have $\mathcal{C}^{0, \alpha}_{X}(\overline{\Omega})\hookrightarrow C^{0,\alpha/s}(\overline{\Omega})$. This completes the proof.
\end{proof}

\section{Nonlinear spectral problem}\label{sec3}
We consider the nonlinear eigenvalue problem
\begin{equation}\label{eigenvalue problem}
\begin{aligned}
\sum_{i=1}^{m}X_{i}^{*}\left(\left|X u\right|^{p-2} X_{i}u\right)&=\lambda|u|^{p-2} u  &&\text { in } \Omega, \\
u&=0
&&\text { on } \partial \Omega,
\end{aligned}
\end{equation}
in the weak  sense.

\begin{definition}\label{def:eigenfunction}
A  function $u \in \mathcal{W}_{X,0}^{1, p}(\Omega)\setminus \{0\}$ is called an eigenfunction of  \eqref{eigenvalue problem} if
\begin{equation}\label{eigenfunction}
\int_{\Omega}|X u|^{p-2} Xu \cdot X \varphi d x=\lambda \int_{\Omega}|u|^{p-2} u \varphi d x
\end{equation}
holds for all $\varphi\in \mathcal{W}_{X,0}^{1, p}(\Omega)$. Such $\lambda$ is called an eigenvalue.
\end{definition}
Now we estimate a lower bound of eigenvalues.
\begin{proposition} \label{lower bound lemma}
Let $(u, \lambda)$ be an eigenpair. Then $\lambda$ has a lower bound
\begin{equation*}
\lambda \geq  \frac{1}{C|\Omega|^{1-\frac{p}{p^{*}}}}, 
\end{equation*}
where $|\Omega|$ denotes $n$-dimensional Lebesgue measure of $\Omega$, the number $p^{*}$ is given by  
\begin{equation}\label{p asterisk}
p^{*}=\begin{cases}
\frac{Qp}{Q-p}& \text{ if } 1<p<Q,\\
2p& \text{ if } p\geq Q,
\end{cases}
\end{equation}
and $C>0$ does not depend on  $u$ and  $\lambda$.
\end{proposition}
\begin{proof}
Let $(u,\lambda)$ be an eigenpair of  \eqref{eigenvalue problem}. The following  embedding
\begin{equation}\label{embeddings for lower bound of eigenvalues}
\mathcal{W}_{X,0}^{1, p}(\Omega)\hookrightarrow\begin{cases}
L^{\frac{Qp}{Q-p}}(\Omega) & \text{ if } 1<p<Q,\\
L^{2p}(\Omega) & \text{ if } p\geq Q,
\end{cases}
\end{equation}
follows from Theorem \ref{continuous Sobolev embedding for 1<p<Q} and Theorem \ref{continuous Sobolev embedding for p>Q}. Then combining the H\"older inequality and  Sobolev embedding \eqref{embeddings for lower bound of eigenvalues}, we obtain
\begin{equation*}
\int_{\Omega}|X u|^{p} d x= \lambda\int_{\Omega}|u|^{p} d x\leq \lambda|\Omega|^{1-\frac{p}{p^{*}}}\left(\int_{\Omega}|u|^{p^{*}}dx\right)^{\frac{p}{p^{*}}}\leq C\lambda|\Omega|^{1-\frac{p}{p^{*}}}\int_{\Omega}|X u|^{p} d x ,
\end{equation*}
where $p^{*}$ is given by \eqref{p asterisk}.
\end{proof}
\begin{corollary}
All eigenvalues are  positive.    
\end{corollary}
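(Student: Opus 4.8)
The plan is to read this off directly from Theorem \ref{lower bound lemma}. If $(u,\lambda)$ is an eigenpair, that theorem already furnishes the bound $\lambda \geq \frac{1}{C|\Omega|^{1-p/p^{*}}}$, and since $\Omega$ is a bounded domain we have $0<|\Omega|<\infty$ together with $C>0$, so the right-hand side is a strictly positive real number; hence $\lambda>0$. This disposes of the corollary with no further work.

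Alternatively, and even more elementarily, one can argue straight from Definition \ref{def:eigenfunction}: choosing the admissible test function $\varphi=u$ in \eqref{eigenfunction} gives $\int_{\Omega}|Xu|^{p}\,dx=\lambda\int_{\Omega}|u|^{p}\,dx$. Since $u\in\mathcal{W}_{X,0}^{1,p}(\Omega)\setminus\{0\}$ we have $\int_{\Omega}|u|^{p}\,dx>0$, and by the Poincar\'e-Friedrichs inequality \eqref{Poincare inequality} also $\int_{\Omega}|Xu|^{p}\,dx>0$ — were it zero, \eqref{Poincare inequality} would force $u=0$ in $\mathcal{W}_{X,0}^{1,p}(\Omega)$. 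Dividing, $\lambda=\int_{\Omega}|Xu|^{p}\,dx\big/\int_{\Omega}|u|^{p}\,dx>0$. There is no genuine obstacle here; the only point worth noting is that nonvanishing of $u$ in the Sobolev space really does force both integrals to be positive, which is precisely what the Poincar\'e inequality guarantees.
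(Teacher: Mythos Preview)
Your proposal is correct, and your first argument is exactly the paper's intended reasoning: the corollary is stated without proof immediately after Theorem~\ref{lower bound lemma}, as the lower bound $\frac{1}{C|\Omega|^{1-p/p^{*}}}$ is manifestly positive. Your alternative argument via $\varphi=u$ and the Poincar\'e--Friedrichs inequality is also sound and self-contained, though not needed here.
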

Although the next theorem will not be used in the sequel, it has an independent interest by showing that the spectrum is a closed set in $\mathbb{R}$.
\begin{theorem}\label{closed set}
The set of eigenvalues is closed.
\end{theorem}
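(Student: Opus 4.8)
The plan is to take a sequence of eigenvalues $\lambda_k$ with $\lambda_k\to\lambda$ and to show that $\lambda$ is itself an eigenvalue. For each $k$ pick an eigenfunction $u_k\in\mathcal{W}_{X,0}^{1,p}(\Omega)\setminus\{0\}$ associated with $\lambda_k$, and normalize so that $\|u_k\|_{L^p(\Omega)}=1$. Testing \eqref{eigenfunction} with $\varphi=u_k$ gives $\int_\Omega|Xu_k|^p\,dx=\lambda_k$, so in the norm \eqref{Poincare norm} the sequence $(u_k)$ is bounded in $\mathcal{W}_{X,0}^{1,p}(\Omega)$ because $\lambda_k\to\lambda$. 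By reflexivity and Proposition \ref{Weak convergence characterization}, after passing to a subsequence we have $u_k\rightharpoonup u$ weakly in $\mathcal{W}_{X,0}^{1,p}(\Omega)$, and by the compact embedding of Theorem \ref{compact Sobolev embedding}, $u_k\to u$ strongly in $L^p(\Omega)$; in particular $\|u\|_{L^p(\Omega)}=1$, so $u\neq 0$.

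The heart of the argument is to upgrade the weak convergence $Xu_k\rightharpoonup Xu$ in $L^p(\Omega;\mathbb{R}^m)$ to strong convergence, which is what is needed to pass to the limit in the nonlinear principal term. Testing \eqref{eigenfunction} (for $\lambda_k$) with $\varphi=u_k-u$ yields
\begin{equation*}
\int_\Omega|Xu_k|^{p-2}Xu_k\cdot X(u_k-u)\,dx=\lambda_k\int_\Omega|u_k|^{p-2}u_k(u_k-u)\,dx,
\end{equation*}
and the right-hand side tends to $0$ since $\lambda_k$ is bounded, $(|u_k|^{p-2}u_k)$ is bounded in $L^{p'}(\Omega)$ with $p'=p/(p-1)$, and $u_k-u\to 0$ in $L^p(\Omega)$. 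Moreover $|Xu|^{p-2}Xu\in L^{p'}(\Omega;\mathbb{R}^m)$ and $X(u_k-u)\rightharpoonup 0$ in $L^p(\Omega;\mathbb{R}^m)$, so $\int_\Omega|Xu|^{p-2}Xu\cdot X(u_k-u)\,dx\to 0$ as well. Subtracting,
\begin{equation*}
\int_\Omega\bigl(|Xu_k|^{p-2}Xu_k-|Xu|^{p-2}Xu\bigr)\cdot X(u_k-u)\,dx\longrightarrow 0.
\end{equation*}
Now invoke the classical monotonicity inequalities for $\xi\mapsto|\xi|^{p-2}\xi$ on $\mathbb{R}^m$: for $p\geq 2$ the integrand dominates $c_p|X(u_k-u)|^p$, giving $Xu_k\to Xu$ in $L^p$ at once; for $1<p<2$ one uses the degenerate lower bound by $c_p(|Xu_k|+|Xu|)^{p-2}|X(u_k-u)|^2$ combined with Hölder's inequality (exponents $2/p$ and $2/(2-p)$) and the uniform bound on $\|Xu_k\|_{L^p}$ to reach the same conclusion.

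Once $Xu_k\to Xu$ strongly in $L^p(\Omega;\mathbb{R}^m)$, continuity of the Nemytskii operator $\xi\mapsto|\xi|^{p-2}\xi$ from $L^p$ to $L^{p'}$ gives $|Xu_k|^{p-2}Xu_k\to|Xu|^{p-2}Xu$ in $L^{p'}(\Omega;\mathbb{R}^m)$, and similarly $|u_k|^{p-2}u_k\to|u|^{p-2}u$ in $L^{p'}(\Omega)$. Passing to the limit in \eqref{eigenfunction} for each fixed $\varphi\in\mathcal{W}_{X,0}^{1,p}(\Omega)$ then yields
\begin{equation*}
\int_\Omega|Xu|^{p-2}Xu\cdot X\varphi\,dx=\lambda\int_\Omega|u|^{p-2}u\,\varphi\,dx,
\end{equation*}
so $(u,\lambda)$ is an eigenpair and $\lambda$ lies in the spectrum; hence the set of eigenvalues is closed. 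I expect the case $1<p<2$ of the strong-gradient-convergence step to be the main technical obstacle, since there the pointwise monotonicity estimate degenerates and must be coupled with an interpolation argument; the remaining steps are a routine compactness-and-continuity exercise.
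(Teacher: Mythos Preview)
Your proposal is correct and follows essentially the same route as the paper's proof: normalize the eigenfunctions in $L^p$, extract a weakly convergent subsequence in $\mathcal{W}_{X,0}^{1,p}(\Omega)$ which is strongly convergent in $L^p$, test with $u_k-u$ to drive the monotonicity expression to zero, and then use the standard $p\ge 2$ and $1<p<2$ monotonicity estimates (the latter combined with the H\"older trick with exponents $2/p$ and $2/(2-p)$) to upgrade to strong convergence of the gradients. Your write-up is in fact slightly more explicit than the paper's in verifying $u\neq 0$ and in invoking Nemytskii continuity to pass to the limit.
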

\begin{proof} 
Let $\{\Tilde{\lambda}_{j}\}$ be a  sequence of eigenvalues which converges to some $\Tilde{\lambda}\in \mathbb{R}$.  We denote an eigenfunction corresponding to $\Tilde{\lambda}_{j}$  by $\Tilde{u}_{j}$. Note that if $u$ is an eigenfunction, then so is $\frac{1}{\|u\|_{p}}u$. Hence, without loss of generality, we assume that
$\left\|\Tilde{u}_{j}\right\|_{p}=1$. Therefore,
$$
\Tilde{\lambda}_{j}=\int_{\Omega}\left|X\Tilde{u}_{j}\right|^p d x,
$$
which implies that $\{\Tilde{u}_{j}\}$ is bounded in $\mathcal{W}_{X,0}^{1, p}(\Omega)$. Therefore, by the Eberlein-\v{S}mulian theorem  and Theorem \ref{compact Sobolev embedding}, there exist a subsequence $\{\Tilde{u}_{j_{k}}\}$ and a function $\Tilde{u}\in \mathcal{W}_{X,0}^{1, p}(\Omega)$ such that
\begin{equation}\label{strong and weak convergence}
\Tilde{u}_{j_{k}} \to \Tilde{u}\enspace \text{ strongly in } L^{p}(\Omega)\quad\text{and} \quad X \Tilde{u}_{j_{k}} \rightharpoonup  X \Tilde{u} \enspace \text{ weakly in } L^{p}(\Omega). 
\end{equation}
Regarding $\Tilde{u}_{j_{k}}-\Tilde{u}$ as a test function, we insert it into \eqref{eigenfunction} to get
\begin{equation*}
\begin{aligned}
&\int_{\Omega}\left(\left|X \Tilde{u}_{j_{k}}\right|^{p-2} X \Tilde{u}_{j_{k}}-|X \Tilde{u}|^{p-2} X \Tilde{u}\right) \cdot \left(X\Tilde{u}_{j_{k}}-X \Tilde{u}\right) d x \\
&=\Tilde{\lambda}_{j_{k}} \int_{\Omega}\left|\Tilde{u}_{j_{k}}\right|^{p-2} \Tilde{u}_{j_{k}}\left(\Tilde{u}_{j_{k}}-\Tilde{u}\right) d x-\int_{\Omega}|X \Tilde{u}|^{p-2} X \Tilde{u} \cdot\left(X \Tilde{u}_{j_{k}}-X \Tilde{u}\right) d x .
\end{aligned}
\end{equation*}
Taking into account \eqref{strong and weak convergence} and letting $k\to \infty$, the above equality yields 
\begin{equation}\label{closed set: limit}
\lim _{k \rightarrow \infty} \int_{\Omega}\left(\left|X \Tilde{u}_{j_{k}}\right|^{p-2} X \Tilde{u}_{j_{k}}-|X \Tilde{u}|^{p-2} X \Tilde{u}\right) \cdot\left(X \Tilde{u}_{j_{k}}-X \Tilde{u}\right) d x=0 .
\end{equation}
If we prove the following convergence
\begin{equation}\label{closed set: strong convergence of the horizontal gradient}
X \Tilde{u}_{j_{k}} \to  X \Tilde{u}\quad \text{ strongly in } L^{p}(\Omega), 
\end{equation}
then $\Tilde{u}$ is an eigenfunction corresponding to $\Tilde{\lambda}$ and the proof is complete. 

In order to prove convergence \eqref{closed set: strong convergence of the horizontal gradient}, we treat $1<p<2$ and $p\geq2$ cases separately.

Let $p\geq 2$. A modification  of the inequality \eqref{convexity (a)} yields
\begin{equation}\label{closed set: inequality}
C\left|\omega_{2}-\omega_{1}\right|^{p} \leq p\left(\left|\omega_{2}\right|^{p-2} \omega_{2}-\left|\omega_{1}\right|^{p-2} \omega_{1}\right) \cdot\left(\omega_{2}-\omega_{1}\right) \quad \text{ for all }\omega_{1}, \omega_{2} \in \mathbb{R}^{m}.
\end{equation}
Inserting $\omega_{1}=X \Tilde{u}$ and $\omega_{2}=X \Tilde{u}_{j_{k}}$ into \eqref{closed set: inequality} and integrating over $\Omega$, we see that \eqref{closed set: limit} implies the strong convergence of $\{X \Tilde{u}_{j_{k}}\}$ in $L^{p}$-norm 
$$
\lim _{k \rightarrow \infty} \int_{\Omega}\left|X \Tilde{u}_{j_{k}}-X \Tilde{u}\right|^{p} d x=0 .
$$

Let $1<p<2$.
A modification  of the inequality \eqref{convexity (b)} leads to
\begin{equation}\label{closed set: inequality 1<p<2}
C\frac{|\omega_{2}-\omega_{1}|^{2}}{(|\omega_{2}|+|\omega_{1}|)^{2-p}}   \leq p\left(\left|\omega_{2}\right|^{p-2} \omega_{2}-\left|\omega_{1}\right|^{p-2} \omega_{1}\right)\cdot\left(\omega_{2}-\omega_{1}\right) \quad \text{ for all }\omega_{1}, \omega_{2} \in \mathbb{R}^{m}.
\end{equation}
Inserting $\omega_{1}=X \Tilde{u}$ and $\omega_{2}=X \Tilde{u}_{j_{k}}$ into \eqref{closed set: inequality 1<p<2} and integrating over $\Omega$, we have
\begin{equation*}
\lim_{k\to\infty}\int_{\Omega}\frac{|X \Tilde{u}_{j_{k}}-X \Tilde{u}|^{2}}{(|X \Tilde{u}_{j_{k}}|+|X \Tilde{u}|)^{2-p}}dx=0.
\end{equation*}
To get the strong convergence of $\{X \Tilde{u}_{j_{k}}\}$ in $L^{p}$-norm for $1<p<2$, we apply the H\"older inequality 
\begin{equation}\label{1<p<2 convergence}
\begin{aligned}
&\int_{\Omega}|X \Tilde{u}_{j_{k}}-X \Tilde{u}|^{p}dx=\int_{\Omega}(|X \Tilde{u}_{j_{k}}|+|X \Tilde{u}|\big)^{\frac{p(2-p)}{2}}\frac{|X \Tilde{u}_{j_{k}}-X \Tilde{u}|^{p}}{(|X \Tilde{u}_{j_{k}}|+|X \Tilde{u}|)^{\frac{p(2-p)}{2}}}dx
\\
&\leq\left(\int_{\Omega}\Big(|X \Tilde{u}_{j_{k}}|+|X \Tilde{u}|\Big)^{p}dx \right)^{\frac{2-p}{2}}\left(\int_{\Omega}\frac{|X \Tilde{u}_{j_{k}}-X \Tilde{u}|^{2}}{(|X \Tilde{u}_{j_{k}}|+|X \Tilde{u}|)^{2-p}}dx \right)^{\frac{p}{2}}
\\
&\leq C\left(\int_{\Omega}\frac{|X \Tilde{u}_{j_{k}}-X \Tilde{u}|^{2}}{(|X \Tilde{u}_{j_{k}}|+|X \Tilde{u}|)^{2-p}}dx \right)^{\frac{p}{2}}
\end{aligned}
\end{equation}
Letting $k\to\infty$, we obtain convergence \eqref{closed set: strong convergence of the horizontal gradient}. This completes the proof.
\end{proof}
The following lemma will be used to prove Theorem \ref{Holder continuity} and Theorem \ref{simplicity theorem}.
\begin{lemma}\label{L infty boundedness}
Let $(u,\lambda)$ be an eigenpair. Then $u\in L^{\infty}(\Omega)$.
\end{lemma}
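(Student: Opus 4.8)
\emph{Strategy.} The plan is to prove the bound by a Moser iteration, regarding the right-hand side $\lambda|u|^{p-2}u$ of \eqref{eigenvalue problem} as a lower-order term. If $p>Q$ there is nothing to do: Theorem~\ref{continuous Sobolev embedding for p>Q} gives $\mathcal{W}_{X,0}^{1, p}(\Omega)\hookrightarrow \mathcal{C}^{0,\alpha}_{X}(\overline{\Omega})$, and a H\"older continuous function on the compact set $\overline{\Omega}$ is bounded. So assume $1<p\le Q$; combining Theorem~\ref{continuous Sobolev embedding for 1<p<Q} (resp.\ Theorem~\ref{continuous Sobolev embedding for p>Q} when $p=Q$) with the Poincar\'e--Friedrichs inequality \eqref{Poincare inequality} yields a Sobolev inequality $\|v\|_{L^{p^{*}}(\Omega)}\le C_{S}\|Xv\|_{L^{p}(\Omega)}$ for all $v\in\mathcal{W}_{X,0}^{1, p}(\Omega)$, with $p^{*}$ as in \eqref{p asterisk} and gain factor $\chi:=p^{*}/p>1$; moreover $u\in L^{p^{*}}(\Omega)$ to begin with.

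\emph{The iteration step.} Fix $M>0$ and $\gamma\ge 1$, set $w:=\min(|u|,M)$, and test \eqref{eigenfunction} with $\varphi:=u\,w^{p(\gamma-1)}$. Since $\varphi=\Phi(u)$ for the globally Lipschitz $\Phi(t)=t\min(|t|,M)^{p(\gamma-1)}$ with $\Phi(0)=0$, one has $\varphi\in\mathcal{W}_{X,0}^{1, p}(\Omega)$ and $X\varphi=\Phi'(u)Xu$ with $\Phi'(u)\ge w^{p(\gamma-1)}$ pointwise. Dropping the nonnegative contribution from $\{|u|<M\}$ gives
\[
\int_{\Omega}w^{p(\gamma-1)}|Xu|^{p}\,dx\le\lambda\int_{\Omega}|u|^{p}w^{p(\gamma-1)}\,dx .
\]
Introducing $z:=u\,w^{\gamma-1}=\Psi(u)$, $\Psi(t)=t\min(|t|,M)^{\gamma-1}$ (again Lipschitz, $\Psi(0)=0$), one has $|Xz|\le\gamma\,w^{\gamma-1}|Xu|$, so applying the Sobolev inequality to $z$, then the last display, then $w\le|u|$, gives
\[
\Big(\int_{\Omega}|u|^{p^{*}}w^{p^{*}(\gamma-1)}\,dx\Big)^{p/p^{*}}\le C_{S}^{p}\gamma^{p}\lambda\int_{\Omega}|u|^{p}w^{p(\gamma-1)}\,dx\le C_{S}^{p}\gamma^{p}\lambda\int_{\Omega}|u|^{p\gamma}\,dx .
\]
Letting $M\to\infty$ (monotone convergence on the left), this becomes, with $q:=p\gamma$,
\[
\|u\|_{L^{\chi q}(\Omega)}\le\bigl(C_{S}^{p}\lambda\,(q/p)^{p}\bigr)^{1/q}\,\|u\|_{L^{q}(\Omega)},
\]
valid whenever $q\ge p$ and $u\in L^{q}(\Omega)$.

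\emph{Conclusion.} Start from $q_{0}:=p^{*}$ (so $u\in L^{q_{0}}(\Omega)$) and set $q_{j+1}:=\chi q_{j}$. The estimate above applies at each level and also upgrades $u\in L^{q_{j}}(\Omega)$ to $u\in L^{q_{j+1}}(\Omega)$; iterating,
\[
\|u\|_{L^{q_{N}}(\Omega)}\le\Big(\prod_{j=0}^{N-1}\bigl(C_{S}^{p}\lambda\,(q_{j}/p)^{p}\bigr)^{1/q_{j}}\Big)\|u\|_{L^{p^{*}}(\Omega)} .
\]
Since $q_{j}=q_{0}\chi^{j}$ with $\chi>1$, the series $\sum_{j}q_{j}^{-1}$ and $\sum_{j}q_{j}^{-1}\log(q_{j}/p)$ converge, so the product is bounded as $N\to\infty$ by a constant $C_{*}=C_{*}(p,Q,\Omega,\lambda)$; since $|\Omega|<\infty$ and $q_{N}\to\infty$, the uniform bound $\|u\|_{L^{q_{N}}(\Omega)}\le C_{*}\|u\|_{L^{p^{*}}(\Omega)}$ forces $u\in L^{\infty}(\Omega)$.

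\emph{Main obstacle.} The real work is the bookkeeping: checking that the truncated test functions are admissible in $\mathcal{W}_{X,0}^{1, p}(\Omega)$ so the chain rule for horizontal derivatives applies, passing to the limit $M\to\infty$ at each level while keeping the $\gamma$-dependence of the constants explicit, and verifying convergence of the infinite product. The subelliptic geometry only enters through the Sobolev inequality and the chain rule for $X$, both already available, so otherwise the argument is the same as for the Euclidean $p$-Laplacian. Alternatively, once $u\in L^{q}(\Omega)$ for all $q<\infty$ is known, one could finish with a De Giorgi truncation $\varphi=(|u|-k)_{+}\operatorname{sign}u$ and Stampacchia's lemma, choosing the auxiliary exponent $q>Q$ so that the decay exponent for $|\{|u|>k\}|$ exceeds $1$.
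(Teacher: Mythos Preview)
Your proof is correct. Both you and the paper dispose of $p>Q$ via the H\"older embedding, but for $1<p\le Q$ you take a genuinely different route: a Moser iteration with truncated power test functions $\varphi=u\min(|u|,M)^{p(\gamma-1)}$, bootstrapping $\|u\|_{L^{q}}$ along the geometric sequence $q_{j}=p^{*}\chi^{j}$. The paper instead runs a De~Giorgi--Stampacchia argument: it tests with the level-set truncation $\varphi=(u-k)_{+}$, combines the Sobolev inequality with H\"older to obtain $\int_{\Omega_{k}}(u-k)\,dx\le Ck\,|\Omega_{k}|^{1+\delta}$ for large $k$ (where $\Omega_{k}=\{u>k\}$), and then invokes a Ladyzhenskaya-type decay lemma to force $\operatorname*{ess\,sup}u<\infty$. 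Your approach yields an explicit quantitative bound $\|u\|_{L^{\infty}}\le C(p,Q,\Omega,\lambda)\,\|u\|_{L^{p^{*}}}$ with transparent dependence on the data, at the price of the chain-rule bookkeeping you flag; the paper's approach uses only the simplest truncation $(u-k)_{+}$ (covered directly by its Proposition~\ref{proposition Xu}) but hides the endgame in an ODE/iteration lemma. Interestingly, the ``alternative'' you sketch at the end---finishing by De~Giorgi truncation and Stampacchia---is essentially what the paper does from the outset, without the preliminary Moser step.
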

\begin{proof}
The case $p>Q$ follows from the  embedding $\mathcal{W}_{X,0}^{1, p}(\Omega)\hookrightarrow \mathcal{C}^{0, \alpha}_{X}(\overline{\Omega})$ provided $0<\alpha<1$, see e.g. \cite[Remark 1.1]{chen2024sharp}.

Let $1<p\leq Q$ and $k>0$. First we will prove that $\esssup_{\Omega}u<\infty$. Assume that 
$$\left|\{x\in\Omega:\enspace u(x)>0\}\right|>0.$$
Otherwise, it suffices to show only $\essinf_{\Omega}u>-\infty$.

Let us define the function
\begin{equation*}
\varphi(x):=\max \{u(x)-k, 0\},
\end{equation*}
which is admissible in $\mathcal{W}_{X,0}^{1, p}(\Omega)$. Indeed, since
\begin{equation*}
    \int_{\Omega}\varphi^{p}dx=\int_{\Omega_{k}}(u-k)^{p}dx\leq \int_{\Omega_{k}}u^{p}dx\leq \int_{\Omega}|u|^{p}dx<\infty,
\end{equation*} 
then $\varphi\in L^{p}(\Omega)$, where
\begin{equation*}
\Omega_{k}:=\{x \in \Omega : \enspace u(x)>k\} .
\end{equation*}
Moreover, 
\begin{equation*}
\int_{\Omega}|X\varphi|^{p}dx=\int_{\Omega_{k}}|Xu|^{p}dx\leq \int_{\Omega}|Xu|^{p}dx<\infty.
\end{equation*}
Finally, $\varphi|_{\partial\Omega}=\max\{u|_{\partial\Omega}-k,0\}=0$, thus $\varphi$ can be regarded as a test function  so that
\begin{equation*}
\int_{\Omega_{k}}|X u|^{p} d x=\lambda \int_{\Omega_{k}}u^{p-1}(u-k) d x    
\end{equation*}
holds. We have $\mathcal{W}_{X,0}^{1, p}(\Omega)\hookrightarrow L^{1}(\Omega)$ for all $1<p\leq Q$, that is, the case $1<p<Q$ follows from Theorem \ref{continuous Sobolev embedding for 1<p<Q}, the case $p=Q$ follows from Theorem \ref{continuous Sobolev embedding for p>Q}. Therefore, $u\in L^{1}(\Omega)$. Since
\begin{equation}\label{upper bound: some properties 0}
 k\left|\Omega_{k}\right|=   \int_{\Omega_{k}}kdx\leq \int_{\Omega_{k}} u dx\leq \int_{\Omega} |u| dx=\|u\|_{1},
\end{equation}
it follows that
\begin{equation}\label{upper bound: some properties}
 \lim_{k\to \infty}\left|\Omega_{k}\right|=0.
\end{equation}
Multiplying the simple inequality
\begin{equation}
\begin{aligned}
u^{p-1}   \leq 2^{p-1}(u-k)^{p-1}+2^{p-1} k^{p-1} \enspace \text{in } \Omega_{k}
\end{aligned}
\end{equation}
by $u-k$ and integrating over $\Omega_{k}$ we obtain
\begin{equation}\label{upper bound: Clarkson's inequality}
\int_{\Omega_{k}} u^{p-1}(u-k) d x \leq 2^{p-1} \int_{\Omega_{k}}(u-k)^{p} d x+2^{p-1} k^{p-1} \int_{\Omega_{k}}(u-k) d x.
\end{equation}
Then applying the H\"older inequality and  Sobolev embedding \eqref{embeddings for lower bound of eigenvalues}, we obtain
\begin{equation}\label{upper bound: Poincare and lower bound}
\begin{aligned}
\int_{\Omega_{k}}(u-k)^{p} d x &\leq|\Omega_{k}|^{1-\frac{p}{p^{*}}}\left(\int_{\Omega_{k}}(u-k)^{p^{*}}dx\right)^{\frac{p}{p^{*}}}= |\Omega_{k}|^{1-\frac{p}{p^{*}}}\left(\int_{\Omega}\varphi^{p^{*}}dx\right)^{\frac{p}{p^{*}}}\\
&\leq C \left|\Omega_{k}\right|^{1-\frac{p}{p^{*}}} \int_{\Omega}|X \varphi|^{p} d x=C\left|\Omega_{k}\right|^{1-\frac{p}{p^{*}}} \int_{\Omega_{k}}|X u|^{p} d x\\
&=\lambda  C \left|\Omega_{k}\right|^{1-\frac{p}{p^{*}}}\int_{\Omega_{k}}|u|^{p-2}u (u-k)dx,
\end{aligned}
\end{equation}
where $p^{*}$ is expressed as \eqref{p asterisk}. Since $\lambda$ is positive, we multiply both sides of \eqref{upper bound: Clarkson's inequality} by $\lambda  C \left|\Omega_{k}\right|^{1-\frac{p}{p^{*}}}$ and use the inequality \eqref{upper bound: Poincare and lower bound} to obtain
\begin{equation*}
\left(1-\lambda C \left|\Omega_{k}\right|^{1-\frac{p}{p^{*}}}2^{p-1}\right) \int_{\Omega_{k}}(u-k)^{p} d x \leq \lambda C\left|\Omega_{k}\right|^{1-\frac{p}{p^{*}}}2^{p-1}k^{p-1}  \int_{\Omega_{k}}(u-k) d x .
\end{equation*}
Let us show that the number $\lambda C \left|\Omega_{k}\right|^{1-\frac{p}{p^{*}}} 2^{p-1}$ can be arbitrarily small for large $k$. In view of \eqref{upper bound: some properties 0}, we have
\begin{equation*}
\lambda C \left|\Omega_{k}\right|^{1-\frac{p}{p^{*}}} 2^{p-1} \leq  \lambda C \frac{\|u\|_{1}^{1-\frac{p}{p^{*}}}}{k^{1-\frac{p}{p^{*}}}}2^{p-1}.
\end{equation*}
Since $\|u\|_{1}/k$ tends to $0$, then  for $\varepsilon>0$ there exists $k_{\varepsilon}\in \mathbb{N}$ such that for all $k\geq k_{\varepsilon}$
\begin{equation}\label{upped bound: some convergence}
\lambda C \frac{\|u\|_{1}^{1-\frac{p}{p^{*}}}}{k^{1-\frac{p}{p^{*}}}}2^{p-1}  < \varepsilon
\end{equation}
holds. Here 
\begin{equation*}
 \lambda^{{\frac{p^{*}}{ p^{*}-p}}}C^{{\frac{p^{*}}{ p^{*}-p}}}2^{\frac{(p-1)p^{*}}{ p^{*}-p}}\|u\|_{1} \varepsilon^{-\frac{p^{*}}{ p^{*}-p}}< k_{\varepsilon}.
\end{equation*}
We can take 
$$k_{\varepsilon}= \lambda^{{\frac{p^{*}}{ p^{*}-p}}}C^{{\frac{p^{*}}{ p^{*}-p}}}2^{\frac{(p-1)p^{*}}{ p^{*}-p}} \|u\|_{1} \varepsilon^{-\frac{p^{*}}{ p^{*}-p}}+1$$ 
for \eqref{upped bound: some convergence}. Now let $\varepsilon=\frac{1}{2}$.  Then 
\begin{equation*}
 \lambda C \left|\Omega_{k}\right|^{1-\frac{p}{p^{*}}}2^{p-1} < \frac{1}{2}    
\end{equation*}
holds for all $k \geq k_{\varepsilon}$. Therefore,
\begin{equation}\label{upper bound: Lp-L1 inequality}
\int_{\Omega_{k}}(u-k)^{p} d x <\lambda C\left|\Omega_{k}\right|^{1-\frac{p}{p^{*}}}2^{p} k^{p-1}  \int_{\Omega_{k}}(u-k) d x  \quad \text{ for all } k \geq k_{\varepsilon}.
\end{equation}
By the H\"older inequality, we have
\begin{equation}\label{upper bound: Holder inequality}
\left(\int_{\Omega_{k}}(u-k) d x \right)^{p}\leq |\Omega_{k}|^{p-1}   \int_{\Omega_{k}}(u-k)^{p} d x.
\end{equation}
Finally, combining \eqref{upper bound: Lp-L1 inequality} and \eqref{upper bound: Holder inequality}, we arrive at
\begin{equation}\label{upper bound: desired inequality}
\int_{\Omega_{k}}(u-k) d x <  (2^{p}\lambda C)^{\frac{1}{p-1}} k\left|\Omega_{k}\right|^{1+\frac{p^{*}-p}{p^{*}(p-1)}} \quad \text{ for all } k \geq k_{\varepsilon}.  
\end{equation}
This is the desired inequality to bound $\esssup_{\Omega}u$ by  \cite[Lemma 5.1, p. 71]{Ladyzhenskaya1968}. One can prove that $\essinf_{\Omega}u>-\infty$ by replacing $u$ with $-u$.
\end{proof}

\begin{theorem}\label{Holder continuity}
All eigenfunctions  are H\"older continuous.
\end{theorem}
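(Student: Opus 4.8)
The plan is to recast the eigenvalue equation as an \emph{inhomogeneous} subelliptic $p$-Laplace equation with bounded right-hand side and then appeal to the regularity theory available in the Carnot--Carath\'eodory setting. By Lemma~\ref{L infty boundedness} every eigenfunction $u$ belongs to $L^{\infty}(\Omega)$, so the function $g:=\lambda|u|^{p-2}u$ lies in $L^{\infty}(\Omega)\subset L^{q}_{loc}(\Omega)$ for every $q<\infty$, and Definition~\ref{def:eigenfunction} says precisely that $u$ is a weak solution of
\begin{equation*}
\sum_{i=1}^{m}X_{i}^{*}\bigl(|Xu|^{p-2}X_{i}u\bigr)=-g \qquad\text{in }\Omega .
\end{equation*}

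First I would dispose of the range $p\geq Q$: by the embedding $\mathcal{W}_{X,0}^{1,p}(\Omega)\hookrightarrow\mathcal{C}^{0,\alpha}_{X}(\overline{\Omega})$ used in the proof of Theorem~\ref{continuous Sobolev embedding for p>Q}, any $u\in\mathcal{W}_{X,0}^{1,p}(\Omega)$ is already H\"older continuous with respect to $d_{X}$ up to the boundary, and $\mathcal{C}^{0,\alpha}_{X}(\overline{\Omega})\hookrightarrow C^{0,\alpha/s}(\overline{\Omega})$ then gives ordinary H\"older continuity. Thus the content of the theorem is the range $1<p<Q$. Here I would invoke the De Giorgi--Nash--Moser machinery for quasilinear subelliptic equations: $(\Omega,d_{X},dx)$ is a doubling metric measure space supporting a scaled $(1,p)$-Poincar\'e inequality on $d_{X}$-balls (Nagel--Stein--Wainger together with Jerison), the principal part $\xi\mapsto|\xi|^{p-2}\xi$ satisfies the usual ellipticity and growth conditions, and $g\in L^{\infty}(\Omega)$; combining the local boundedness already in Lemma~\ref{L infty boundedness} with the Harnack inequality for the homogeneous operator in \cite{capogna1993embedding} and its standard oscillation-decay consequence adapted to bounded data, one obtains $u\in\mathcal{C}^{0,\alpha}_{X}(\Omega')$ for every $\Omega'\Subset\Omega$ and some $\alpha=\alpha(p,Q)\in(0,1)$. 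Finally $\mathcal{C}^{0,\alpha}_{X}(\Omega')\hookrightarrow C^{0,\alpha/s}(\Omega')$ by \cite[Proposition 2.14]{bramanti2023hormander}, so $u$ is locally H\"older continuous in the Euclidean sense as well.

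Alternatively, rather than quoting an external regularity statement one could run a self-contained De Giorgi iteration of the same flavour as the $L^{\infty}$ argument in Lemma~\ref{L infty boundedness}: test \eqref{eigenfunction} against truncations $\max\{u-k,0\}$ on superlevel sets, use the Sobolev inequality \eqref{embeddings for lower bound of eigenvalues} and the fact that $g$ is bounded to derive a geometric decay of $\int_{\Omega_k}|X u|^{p}$, and convert the decay of oscillation on $d_{X}$-balls into a H\"older modulus. Either route gives the same conclusion.

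The main obstacle is the step for $1<p<Q$: one needs a local H\"older estimate for the \emph{inhomogeneous} subelliptic $p$-Laplacian that is valid for general H\"ormander vector fields (not only on Carnot groups), and the care lies in verifying the structural hypotheses of that machinery --- the doubling property of $d_{X}$-balls, the scaled Poincar\'e inequality, and the admissibility of the datum $g$ in the right Lebesgue class --- which in our setting are guaranteed by the Nagel--Stein--Wainger ball estimates and the function-space properties recorded in Section~\ref{sec2}. Once this is granted, the reduction afforded by Lemma~\ref{L infty boundedness} makes the remainder routine.
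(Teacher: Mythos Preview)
Your proposal is correct and follows essentially the same route as the paper: reduce to $u\in L^{\infty}(\Omega)$ via Lemma~\ref{L infty boundedness} and then invoke the H\"older regularity for bounded weak solutions of the subelliptic $p$-Laplacian from \cite{capogna1993embedding} (the paper simply cites their Theorem~3.35 rather than rederiving the oscillation decay). One small slip: the embedding $\mathcal{W}_{X,0}^{1,p}(\Omega)\hookrightarrow\mathcal{C}^{0,\alpha}_{X}(\overline{\Omega})$ holds only for $p>Q$, not $p\geq Q$---at the critical exponent $p=Q$ one has only $L^{q}$ for every finite $q$---so that borderline case must be grouped with the subcritical range and handled by the regularity argument, exactly as the paper does by splitting at $1<p\leq Q$ versus $p>Q$.
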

\begin{proof}
Let $u$ be an eigenfunction. The proof is split in two cases.

Let $1<p\leq Q$. Our eigenvalue problem \eqref{eigenvalue problem} can be seen as a particular case of the general subelliptic equation with $f=\lambda|u|^{p-2} u $ presented in \cite[p. 1767]{capogna1993embedding}. Since $u$ is essentially bounded in $\Omega$ by Lemma \ref{L infty boundedness}, then there exists $0<\alpha<1$ such that
$$
\underset{x, y \in \Omega}{\operatorname{ess} \sup } \frac{|u(x)-u(y)|}{d_{X}(x,y)^\alpha} < \infty
$$
by \cite[Theorem 3.35]{capogna1993embedding}. After a redefinition in a set of Lebesgue measure zero (including the boundary of $\Omega$), we derive the H\"older continuity of $u$ with respect to  $d_{X}$, so $u\in \mathcal{C}^{0,\alpha}_{X}(\overline{\Omega})$.

Let $p>Q$. Then $u$ is H\"older continuous with respect to  $d_{X}$  by \cite[Remark 1.1]{chen2024sharp}.
\end{proof}

Given $u \in \mathcal{W}_{X,0}^{1, p}(\Omega) \setminus\{0\}$, we define the Rayleigh quotient by
\begin{equation}\label{Rayleigh quotient}
\frac{\int_{\Omega}|X u|^p d x}{\int_{\Omega}|u|^p d x}.
\end{equation}
We will show  that minimization  the Rayleigh quotient gives the first eigenvalue of \eqref{eigenvalue problem}. 
\begin{theorem}\label{existence and uniqueness of the Rayleigh quotient}
There exists    $u_{1}\in \mathcal{W}_{X,0}^{1, p}(\Omega)\setminus\{0\}$   such that
$$\frac{\int_{\Omega}|X u_{1}|^p d x}{\int_{\Omega}|u_{1}|^p d x} =\inf_{u \in \mathcal{W}_{X,0}^{1, p}(\Omega)\setminus\{0\}}\frac{\int_{\Omega}|X u|^p d x}{\int_{\Omega}|u|^p d x} =\lambda_{1}.$$
Moreover, $(\lambda_{1}, u_{1})$ satisfies \eqref{eigenfunction} for all  $\varphi\in \mathcal{W}_{X,0}^{1, p}(\Omega)$ and $\lambda_{1}$ is the smallest eigenvalue. We call $(\lambda_{1}, u_{1})$ the first eigenpair.
\end{theorem}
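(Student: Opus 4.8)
The argument has three parts: (1) existence of a minimizer, (2) uniqueness up to scalar, (3) identification of the minimizer as the first eigenfunction with eigenvalue $\lambda_1$ equal to the infimum.

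For existence, I would use the direct method of the calculus of variations. By homogeneity of the Rayleigh quotient, minimizing it over $\mathcal{W}_{X,0}^{1,p}(\Omega)\setminus\{0\}$ is equivalent to minimizing $\int_\Omega |Xu|^p\,dx$ subject to the constraint $\int_\Omega |u|^p\,dx = 1$. Take a minimizing sequence $\{u_k\}$ on this constraint set; then $\|u_k\|_{\mathcal{W}_{X,0}^{1,p}}$ is bounded (using the norm \eqref{Poincare norm}), so by the Eberlein--\v{S}mulian theorem and reflexivity there is a subsequence with $u_k \rightharpoonup u_1$ weakly in $\mathcal{W}_{X,0}^{1,p}(\Omega)$, and by Theorem \ref{compact Sobolev embedding}, $u_k \to u_1$ strongly in $L^p(\Omega)$. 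Strong $L^p$ convergence preserves the constraint, so $\int_\Omega |u_1|^p\,dx = 1$ and in particular $u_1\neq 0$. Weak lower semicontinuity of the convex functional $u\mapsto \int_\Omega |Xu|^p\,dx$ (via Proposition \ref{Weak convergence characterization}, since $Xu_k\rightharpoonup Xu_1$ weakly in $L^p$, and the $L^p$-norm is weakly lower semicontinuous) gives $\int_\Omega |Xu_1|^p\,dx \le \liminf_k \int_\Omega |Xu_k|^p\,dx = \lambda_1$, so $u_1$ is a minimizer.

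To show $(\lambda_1, u_1)$ satisfies \eqref{eigenfunction}, I would differentiate. For fixed $\varphi\in\mathcal{W}_{X,0}^{1,p}(\Omega)$, define $g(t) = \int_\Omega |X(u_1+t\varphi)|^p\,dx - \lambda_1 \int_\Omega |u_1 + t\varphi|^p\,dx$. Since $u_1$ minimizes the Rayleigh quotient and $\int_\Omega |Xu_1|^p\,dx = \lambda_1\int_\Omega|u_1|^p\,dx$, we have $g(t)\ge 0$ for all $t$ with $g(0)=0$, so $g'(0)=0$; computing the derivative (the integrands $s\mapsto |s|^p$ are $C^1$ with derivative $p|s|^{p-2}s$, and one justifies differentiation under the integral by a standard dominated-convergence estimate using $||a+tb|^p - |a|^p| \le C(|a|^{p-1}+|b|^{p-1})|b|\,|t|$ and Hölder) yields exactly \eqref{eigenfunction}. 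Thus $\lambda_1$ is an eigenvalue, and it is the smallest: any eigenvalue $\mu$ with eigenfunction $v$ satisfies, on taking $\varphi = v$ in \eqref{eigenfunction}, $\mu = \int_\Omega|Xv|^p / \int_\Omega|v|^p \ge \lambda_1$.

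For uniqueness up to a multiplicative constant, I would exploit strict convexity. The main obstacle is that $u\mapsto \int_\Omega |Xu|^p\,dx$ is not strictly convex in $u$ (it is invariant under $u\mapsto -u$ and degenerate along other directions), so one must use the classical convexity trick: for nonnegative minimizers $u, v$ (one may assume $u_1\ge 0$, $v\ge 0$ after replacing by $|u_1|, |v|$, since $|X|w|| = |Xw|$ a.e. and $\||w|\|_p = \|w\|_p$, so $|w|$ is also a minimizer), normalize $\|u\|_p = \|v\|_p = 1$ and set $w = \left(\tfrac{u^p + v^p}{2}\right)^{1/p}$. One shows $\|w\|_p = 1$ and, via the pointwise inequality $|Xw|^p \le \tfrac12|Xu|^p + \tfrac12|Xv|^p$ (which follows from the convexity of $(a,b)\mapsto |a^{1/p}\xi + b^{1/p}\eta|^p$ on $[0,\infty)^2\times(\mathbb{R}^m)^2$ applied to $a=u^p, b=v^p$ with $\xi = Xu/u, \eta = Xv/v$ where $u,v>0$ — a known computation, e.g. in the Euclidean case of Lindqvist), that $\int_\Omega |Xw|^p \le \lambda_1$, forcing equality in the pointwise inequality a.e. Equality in that strict convexity statement forces $Xu/u = Xv/v$ a.e. on $\{u>0\}\cap\{v>0\}$, i.e. $X(\log u - \log v) = 0$; combined with connectedness of $\Omega$, the positivity of eigenfunctions established via Harnack (so that $\{u>0\} = \{v>0\} = \Omega$ up to null sets), and the fact that $Xf = 0$ for all Hörmander fields forces $f$ constant on the connected set $\Omega$ (since the fields bracket-generate), one concludes $u = cv$ a.e. Tracking the sign changes back, any minimizer is a scalar multiple of $u_1$.
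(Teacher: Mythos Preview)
Your existence argument (direct method, weak compactness, weak lower semicontinuity of $v\mapsto\|Xv\|_p$) and your derivation of the Euler--Lagrange identity by differentiating at $t=0$ coincide with the paper's proof; you are in fact slightly more careful in invoking the compact embedding (Theorem~\ref{compact Sobolev embedding}) to ensure the weak limit still satisfies $\|u_1\|_p=1$. The identification of $\lambda_1$ as the smallest eigenvalue is also identical.

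Where you genuinely diverge is in the uniqueness step. The paper applies the convexity of $\omega\mapsto|\omega|^p$ to the \emph{arithmetic} midpoint $\tfrac12(u_1+\tilde u_1)$ of two normalized minimizers and derives a contradiction from strict convexity unless $Xu_1=X\tilde u_1$ a.e. That step is delicate as written, since $\|\tfrac12(u_1+\tilde u_1)\|_p$ need not equal $1$ (take $\tilde u_1=-u_1$), so the midpoint is not an admissible competitor on the constraint sphere; the paper's decisive uniqueness argument is really the separate simplicity proof in Theorem~\ref{simplicity theorem}. You instead replace $u_1,v$ by their absolute values, upgrade them to strictly positive via Harnack, and use the \emph{power-mean} competitor $w=\big(\tfrac12(u^p+v^p)\big)^{1/p}$, which sits exactly on the constraint sphere and obeys the pointwise hidden-convexity bound $|Xw|^p\le\tfrac12|Xu|^p+\tfrac12|Xv|^p$ with equality only when $X\log u=X\log v$. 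This is the Lindqvist/Belloni--Kawohl device, closer in spirit to the paper's Theorem~\ref{simplicity theorem} than to its proof of Theorem~\ref{existence and uniqueness of the Rayleigh quotient}. Your route buys a sign-robust, self-contained uniqueness proof here, at the cost of forward references to Harnack (Theorem~\ref{Harnack inequality}) and to the fact that $X_if=0$ for all $i$ forces $f$ locally constant---the latter is a one-line consequence of hypoellipticity of $\sum_i X_i^*X_i$ and is worth stating explicitly. The ``tracking signs back'' step also merits a sentence: once $|u_1|=c|v|>0$ and $u_1,v$ are continuous (Theorem~\ref{Holder continuity}), connectedness of $\Omega$ forces $u_1=\pm c\,v$.
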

\begin{proof}
If $u$ is an eigenfunction of \eqref{eigenvalue problem}, then so is $Cu$ with $C\neq 0$.
Thus, without loss of generality, it suffices to minimize $\int_{\Omega}|X u|^p d x$ with  the normalization $\|u\|_{p}=1$.

First we prove existence of a minimizer. Let us define the $p$-energy functional $E:\mathcal{W}_{X,0}^{1, p}(\Omega)\to \mathbb{R}$ by
\begin{equation}\label{p-energy functional}
E(u):=\int_{\Omega}|X u|^{p} d x.
\end{equation}
Also let
\begin{equation*}
E_{0}:=\inf_{\stackrel{u \in \mathcal{W}_{X,0}^{1, p}(\Omega)}{\|u\|_{p}=1}} \int_{\Omega}|X u|^{p} d x.
\end{equation*}
We choose a minimizing sequence  $\left\{\Tilde{u}_{j}\right\}$ in $ \mathcal{W}_{X,0}^{1, p}(\Omega)$ with $\|\Tilde{u}_{j}\|_{p}=1$ such that
\begin{equation*}
\int_{\Omega}\left|X\Tilde{u}_{j}\right|^{p} d x<E_{0}+\frac{1}{j}.
\end{equation*}
Therefore, $\left\{\Tilde{u}_{j}\right\}$ is bounded in
$\mathcal{W}_{X,0}^{1, p}(\Omega)$.  By the Eberlein-\v{S}mulian theorem,  there exist  a subsequence $\{\Tilde{u}_{j_{k}}\}$ and a function $u_{1} \in \mathcal{W}_{X,0}^{1, p}(\Omega)$ such that
\begin{equation*}
\Tilde{u}_{j_{k}} \rightharpoonup  u_{1}\enspace \text{ weakly in } \mathcal{W}_{X,0}^{1, p}(\Omega). 
\end{equation*}
 Now we show that $u_{1}$ is a minimizer. Since $X\Tilde{u}_{j_{k}} \rightharpoonup  Xu_{1}$ weakly in $L^{p}(\Omega)$ by Proposition \ref{Weak convergence characterization}, then in view of the weakly lower semicontinuity of $\|\cdot\|_{p}$ norm we have
\begin{equation*}
E(u_{1}) \leq \liminf _{j_{k} \rightarrow \infty} E\left(\Tilde{u}_{j_{k}}\right)=E_{0}. 
\end{equation*}

Let $\lambda_1:=\int_{\Omega}|X u_1|^{p} d x$. Now we will show that  $\lambda_{1}$ is an eigenvalue and $u_{1}$ is the corresponding eigenfunction. For every $\varphi\in \mathcal{W}_{X,0}^{1, p}(\Omega)$  we define the functional  $f:\mathbb{R}\to \mathbb{R}$ by
\begin{equation*}
f(\varepsilon):=\frac{\int_{\Omega}|X(u_{1}+\varepsilon \varphi)|^{p} d x}{\int_{\Omega}|u_{1}+\varepsilon \varphi|^{p} d x}.
\end{equation*}
Since $u_{1}$ is the minimizer,
it follows that  $f^{\prime}(0)=0$, that is,
\begin{equation*}
\int_{\Omega}|u_{1}|^{p} d x \int_{\Omega}|X u_{1}|^{p-2} X u_{1} \cdot X \varphi d x=\int_{\Omega}|u_{1}|^{p-2} u_{1} \varphi d x \int_{\Omega}|X u_{1}|^{p} d x.
\end{equation*}
Therefore, taking into account $\|u_{1}\|_{p}=1$, we have
\begin{equation*}
\begin{aligned}
\int_{\Omega}|X u_{1}|^{p-2} X u_{1} \cdot X \varphi d x &= \int_{\Omega}|X u_{1}|^{p} d x \int_{\Omega}|u_{1}|^{p-2} u_{1} \varphi dx\\
&=\lambda_{1} \int_{\Omega}|u_{1}|^{p-2} u_{1} \varphi d x
\end{aligned}
\end{equation*}
for all $\varphi\in \mathcal{W}_{X,0}^{1, p}(\Omega)$, so $(\lambda_{1}, u_{1})$ is an eigenpair. It only remains to prove that $\lambda_{1}$ is the smallest. Indeed, every eigenvalue $\Tilde{\lambda}\neq\lambda_{1}$ with a corresponding eigenfunction $\Tilde{u}$ with $\|\Tilde{u}\|_{p}=1$ satisfies
\begin{equation*}
\lambda_{1}=\int_{\Omega}|X u_{1}|^p d x\leq \int_{\Omega}|X \Tilde{u}|^p d x=\Tilde{\lambda} \int_{\Omega} |\Tilde{u}|^{p}d x=\Tilde{\lambda},
\end{equation*}
therefore, $\lambda_{1}$ is the smallest eigenvalue.
\end{proof}
As a consequence of Theorem \ref{existence and uniqueness of the Rayleigh quotient}, we have found the best constant in \eqref{Poincare inequality}.
\begin{corollary}\label{the best constant of Poincare}
The Poincar\'e-Friedrichs inequality for H\"ormander vector fields
\begin{equation*}
\int_{\Omega}|u|^{p} d x\leq \lambda_{1}^{-1} \int_{\Omega}|X u|^{p} d x 
\end{equation*}
holds for all $u\in \mathcal{W}_{X,0}^{1, p}(\Omega)$. The constant $\lambda_{1}^{-1}$ is sharp.
\end{corollary}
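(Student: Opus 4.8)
The plan is to read off both assertions directly from Theorem \ref{existence and uniqueness of the Rayleigh quotient}, which identifies $\lambda_{1}$ with the infimum of the Rayleigh quotient over $\mathcal{W}_{X,0}^{1,p}(\Omega)\setminus\{0\}$. First I would dispose of the trivial case $u=0$, where the inequality is an equality $0\le 0$. For $u\in\mathcal{W}_{X,0}^{1,p}(\Omega)\setminus\{0\}$, the defining property of the infimum gives
\begin{equation*}
\lambda_{1}=\inf_{v\in \mathcal{W}_{X,0}^{1, p}(\Omega)\setminus\{0\}}\frac{\int_{\Omega}|X v|^p d x}{\int_{\Omega}|v|^p d x}\leq \frac{\int_{\Omega}|X u|^p d x}{\int_{\Omega}|u|^p d x},
\end{equation*}
and since $\int_{\Omega}|u|^{p}\,dx>0$, multiplying through yields $\lambda_{1}\int_{\Omega}|u|^{p}\,dx\le \int_{\Omega}|X u|^{p}\,dx$. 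Because $\lambda_{1}>0$ (Corollary after Theorem \ref{lower bound lemma}), this is exactly $\int_{\Omega}|u|^{p}\,dx\le \lambda_{1}^{-1}\int_{\Omega}|X u|^{p}\,dx$, which is the claimed inequality for all $u\in\mathcal{W}_{X,0}^{1,p}(\Omega)$.

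For the sharpness of the constant $\lambda_{1}^{-1}$, I would exhibit a function attaining equality. Theorem \ref{existence and uniqueness of the Rayleigh quotient} provides the first eigenfunction $u_{1}$, normalized so that $\|u_{1}\|_{p}=1$ and $\int_{\Omega}|X u_{1}|^{p}\,dx=\lambda_{1}$; hence
\begin{equation*}
\int_{\Omega}|u_{1}|^{p}\,dx=1=\lambda_{1}^{-1}\lambda_{1}=\lambda_{1}^{-1}\int_{\Omega}|X u_{1}|^{p}\,dx.
\end{equation*}
Thus no constant strictly smaller than $\lambda_{1}^{-1}$ can work: if $0<C<\lambda_{1}^{-1}$ satisfied the inequality for every admissible $u$, then testing with $u_{1}$ would give $1=\int_{\Omega}|u_{1}|^{p}\,dx\le C\int_{\Omega}|X u_{1}|^{p}\,dx=C\lambda_{1}<1$, a contradiction. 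Equivalently, the best (smallest) constant equals $\sup_{u\ne 0}\frac{\int_{\Omega}|u|^{p}\,dx}{\int_{\Omega}|X u|^{p}\,dx}=\lambda_{1}^{-1}$, the reciprocal of the Rayleigh infimum.

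There is essentially no obstacle in this argument: the entire content has already been established in Theorem \ref{existence and uniqueness of the Rayleigh quotient}, and the only points requiring a word of care are the positivity of $\lambda_{1}$ (so that $\lambda_{1}^{-1}$ is well defined), the trivial case $u=0$, and the observation that the infimum in the Rayleigh quotient is in fact attained (by $u_{1}$), which is precisely what upgrades the inequality to a sharp one.
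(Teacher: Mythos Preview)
Your proof is correct and takes essentially the same approach as the paper: the corollary is stated immediately after Theorem \ref{existence and uniqueness of the Rayleigh quotient} with no separate proof, since both the inequality and its sharpness are immediate consequences of the variational characterization $\lambda_{1}=\inf_{u\ne 0}\int_{\Omega}|Xu|^{p}\,dx/\int_{\Omega}|u|^{p}\,dx$ and the attainment of this infimum by $u_{1}$. Your write-up simply makes these two observations explicit.
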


Moreover, we can obtain the nonnegativity of $u_{1}$.
\begin{corollary}\label{|u_1| minimizes}
Let $u_{1}$ be the minimizer in Theorem \ref{existence and uniqueness of the Rayleigh quotient}. Then $|u_{1}|$ also minimizes the Rayleigh quotient.
\end{corollary}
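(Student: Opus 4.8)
The plan is to show that $|u_{1}|$ is an admissible competitor with the same Rayleigh quotient as $u_{1}$, so it must also realize the infimum in Theorem \ref{existence and uniqueness of the Rayleigh quotient}. Since $\bigl\||u_{1}|\bigr\|_{p}=\|u_{1}\|_{p}=1$ and the quotient depends only on $|Xu|$ and $|u|$, the whole matter reduces to proving that $|u_{1}|\in\mathcal{W}_{X,0}^{1,p}(\Omega)$ and that $\bigl|X|u_{1}|\bigr|=|Xu_{1}|$ a.e.\ in $\Omega$; once this is known,
\[
\frac{\int_{\Omega}\bigl|X|u_{1}|\bigr|^{p}\,dx}{\int_{\Omega}|u_{1}|^{p}\,dx}=\frac{\int_{\Omega}|Xu_{1}|^{p}\,dx}{\int_{\Omega}|u_{1}|^{p}\,dx}=\lambda_{1},
\]
and $|u_{1}|\in\mathcal{W}_{X,0}^{1,p}(\Omega)\setminus\{0\}$ forces $|u_{1}|$ to be a minimizer.

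I would approach this through smooth approximation, so as to keep all pointwise manipulations at the level of $C^{\infty}$ functions. Pick $\phi_{j}\in C_{0}^{\infty}(\Omega)$ with $\phi_{j}\to u_{1}$ in $\mathcal{W}_{X}^{1,p}(\Omega)$, which is possible by definition of $\mathcal{W}_{X,0}^{1,p}(\Omega)$. For a smooth $\phi_{j}$ the function $|\phi_{j}|$ is Lipschitz with compact support, hence $|\phi_{j}|\in\mathcal{W}_{X,0}^{1,p}(\Omega)$: indeed $|\phi_{j}|\ast\rho_{\delta}\in C_{0}^{\infty}(\Omega)$ for small $\delta$, Euclidean partial derivatives commute with mollification, and since the coefficients $b_{ik}$ are smooth one gets $|\phi_{j}|\ast\rho_{\delta}\to|\phi_{j}|$ in $\mathcal{W}_{X}^{1,p}(\Omega)$ as $\delta\to 0$. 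Moreover, by the classical chain rule for Sobolev functions composed with $t\mapsto|t|$ one has $X_{i}|\phi_{j}|=\operatorname{sgn}(\phi_{j})\,X_{i}\phi_{j}$ a.e., and since $X_{i}\phi_{j}=0$ a.e.\ on the level set $\{\phi_{j}=0\}$, it follows that $\bigl|X|\phi_{j}|\bigr|=|X\phi_{j}|$ a.e.\ in $\Omega$. Consequently $E(|\phi_{j}|)=E(\phi_{j})\to E(u_{1})=\lambda_{1}$ and $\bigl\||\phi_{j}|\bigr\|_{p}=\|\phi_{j}\|_{p}\to 1$, where $E$ is the $p$-energy \eqref{p-energy functional}; and $|\phi_{j}|\to|u_{1}|$ in $L^{p}(\Omega)$ because $\bigl||a|-|b|\bigr|\le|a-b|$.

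Now I would pass to the limit exactly as in the existence part of Theorem \ref{existence and uniqueness of the Rayleigh quotient}. The sequence $\{|\phi_{j}|\}$ is bounded in $\mathcal{W}_{X,0}^{1,p}(\Omega)$, so by the Eberlein--\v{S}mulian theorem a subsequence converges weakly in $\mathcal{W}_{X,0}^{1,p}(\Omega)$; by Proposition \ref{Weak convergence characterization} this gives weak $L^{p}$-convergence of the functions and of their horizontal gradients, and comparing with the strong $L^{p}$-convergence $|\phi_{j}|\to|u_{1}|$ identifies the weak limit as $|u_{1}|$. In particular $|u_{1}|\in\mathcal{W}_{X,0}^{1,p}(\Omega)$, since this is a weakly closed subspace, and by weak lower semicontinuity of $\|X\,\cdot\,\|_{p}^{p}$ we get $E(|u_{1}|)\le\liminf_{j}E(|\phi_{j}|)=\lambda_{1}$. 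On the other hand $E(|u_{1}|)\ge\lambda_{1}\bigl\||u_{1}|\bigr\|_{p}^{p}=\lambda_{1}$ because $\lambda_{1}$ is the infimum of the Rayleigh quotient. Hence the Rayleigh quotient of $|u_{1}|$ equals $\lambda_{1}$, which is the assertion.

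The only step that is not a direct re-use of material already in the paper is the claim that $|\phi_{j}|$ belongs to the trace-zero space $\mathcal{W}_{X,0}^{1,p}(\Omega)$ — equivalently, that $\mathcal{W}_{X,0}^{1,p}(\Omega)$ is stable under the lattice operation $u\mapsto|u|$ — and this is where I expect the main (though mild) obstacle to be; the subtlety is that $\mathcal{W}_{X}^{1,p}$-functions need not be weakly differentiable in the Euclidean sense, which is precisely why I route the argument through the smooth $\phi_{j}$, for which the mollification estimate above is elementary. If one instead wants to handle $|u_{1}|$ directly, the corresponding obstacle is the subelliptic chain rule $X_{i}|v|=\operatorname{sgn}(v)X_{i}v$ a.e.\ for $v\in\mathcal{W}_{X}^{1,p}(\Omega)$, which can be proved by mollifying $v$ and using a Friedrichs-type commutator estimate (available because the $b_{ik}$ are smooth), together with the fact that $X_{i}v=0$ a.e.\ on level sets of $v$ to absorb the discontinuity of $\operatorname{sgn}$.
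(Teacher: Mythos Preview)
Your argument is correct but takes a substantially different route from the paper. The paper's proof is one sentence: it invokes Proposition~\ref{proposition Xu} (stated in the appendix, quoting \cite[Lemma~3.5]{garofalo1996isoperimetric}), which directly gives $|u_{1}|\in\mathcal{W}_{X,0}^{1,p}(\Omega)$ together with the pointwise chain rule $X|u_{1}|=\operatorname{sgn}(u_{1})\,Xu_{1}$ a.e., hence $\bigl|X|u_{1}|\bigr|=|Xu_{1}|$ a.e.\ and the Rayleigh quotients coincide. You instead rebuild this lattice stability from first principles: approximate $u_{1}$ by $\phi_{j}\in C_{0}^{\infty}(\Omega)$, handle $|\phi_{j}|$ by mollification plus the Euclidean chain rule, and pass to the limit via weak compactness and weak lower semicontinuity of the $p$-energy, squeezing $E(|u_{1}|)$ between $\lambda_{1}$ from above and below. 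What you gain is a self-contained argument that does not rely on the Garofalo--Nhieu lemma; what you lose is brevity, and you end up with only the equality of energies $E(|u_{1}|)=E(u_{1})$ rather than the stronger pointwise identity $\bigl|X|u_{1}|\bigr|=|Xu_{1}|$---which suffices for the corollary but is in fact available for free from the proposition already recorded in the appendix.
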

\begin{proof}
Proposition \ref{proposition Xu} ensures that $|u_{1}| \in \mathcal{W}_{X,0}^{1, p}(\Omega)$ and  $|Xu_{1}|=|X|u_{1}||$  a.e. in $\Omega$.
\end{proof}
Now we recall the Harnack inequality from \cite{Lu1996} adapted to the eigenvalue problem \eqref{eigenvalue problem}, which will be used to prove the positivity of the first eigenfunction $u_{1}$.
\begin{theorem}\label{Harnack inequality}
Let $x_{0}\in\Omega$ such that $B_{X}(x_0, 3r)\subset \Omega$ for some $r>0$. Suppose $u$ is a nonnegative eigenfunction of \eqref{eigenvalue problem} in  $B_{X}(x_0, 3r)$. Then
\begin{equation}\label{Harnack}
\esssup _{B_{X}(x_0, r)} u(x) \leq C \essinf _{B_{X}(x_0, r)} u(x),
\end{equation}
where $C>0$ is a constant.
\end{theorem}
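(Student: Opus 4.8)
The plan is to recognize \eqref{Harnack} as a special case of the quasilinear subelliptic Harnack inequality of Lu \cite{Lu1996}, so that the only work is to put the eigenvalue equation into the required form and to verify the structural hypotheses.

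First I would rewrite the weak equation in a convenient form. Since $u\ge 0$, we have $|u|^{p-2}u=u^{p-1}$, so \eqref{eigenfunction} says exactly that $u$ is a nonnegative weak solution in $B_{X}(x_0,3r)$ of
\begin{equation*}
\sum_{i=1}^{m} X_i^{*}\!\left(|Xu|^{p-2}X_i u\right)=-\lambda\,u^{p-1},
\end{equation*}
which is of the quasilinear type $\sum_i X_i^{*}A_i(x,u,Xu)=B(x,u,Xu)$ with $A_i(x,u,\xi)=|\xi|^{p-2}\xi_i$ and $B(x,u,\xi)=-\lambda\,u^{p-1}$.

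Next I would check the structure conditions. The principal part is homogeneous and satisfies the ellipticity and growth bounds $\sum_i A_i(x,u,\xi)\,\xi_i=|\xi|^{p}$ and $\bigl(\sum_i|A_i(x,u,\xi)|^{2}\bigr)^{1/2}=|\xi|^{p-1}$, with absolute constants, independent of $x$, $u$ and $\xi$. For the lower-order term, the key point is that $u$ is essentially bounded: by Lemma \ref{L infty boundedness} one has $M:=\|u\|_{L^{\infty}(\Omega)}<\infty$, whence $|B(x,u,\xi)|=\lambda\,u^{p-1}\le\lambda M^{p-1}$ on $B_{X}(x_0,3r)$; in particular $B$ is dominated by a term of the form $c\,(1+|u|^{p-1}+|\xi|^{p-1})$ with $c=c(\lambda,M)$, as required by \cite{Lu1996}. (If one only assumes that $u$ solves the equation weakly inside $B_{X}(x_0,3r)$, local boundedness of nonnegative subsolutions — available within the same subelliptic De Giorgi--Nash--Moser theory, cf. \cite{capogna1993embedding}, \cite{Lu1996} — supplies the bound on the relevant ball, using $B_{X}(x_0,3r)\Subset\Omega$.)

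Finally, applying the Harnack inequality of \cite{Lu1996} on the pair of concentric balls $B_{X}(x_0,r)\subset B_{X}(x_0,3r)\subset\Omega$ produces a constant $C>0$, depending only on $p$, the vector fields $X_1,\dots,X_m$, the radius $r$, $\lambda$ and $M$ (but not on the point in $B_{X}(x_0,r)$ at which sup or inf is attained), such that $\esssup_{B_{X}(x_0,r)}u\le C\,\essinf_{B_{X}(x_0,r)}u$, which is exactly \eqref{Harnack}. I expect the main obstacle to be essentially bookkeeping: one must confirm that the Moser-iteration Harnack theorem in \cite{Lu1996} is formulated for inhomogeneous right-hand sides and for the dilation factor $3$ between inner and outer balls, and that the doubling property of Carnot--Carath\'eodory balls and the subelliptic Poincar\'e inequality it relies on are precisely those recorded in Section \ref{sec2}; the genuine degeneracy (vector fields spanning only through commutators, together with the $p$-Laplacian nonlinearity) is already incorporated in \cite{Lu1996}, so no new analysis is needed beyond this verification.
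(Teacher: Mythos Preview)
Your proposal is correct and matches the paper's approach: both reduce the statement to the subelliptic Harnack inequality of Lu \cite[Corollary 3.11]{Lu1996}, so the only work is structural verification. The one difference is that the paper observes that since the lower-order term $B(x,u,\xi)=-\lambda u^{p-1}$ carries no $|\xi|^{p}$ contribution (i.e., $b_0=0$ in Lu's structure conditions), the a priori boundedness hypothesis on $u$ in \cite{Lu1996} can be dispensed with entirely, whereas you supply boundedness via Lemma~\ref{L infty boundedness} (or local subsolution estimates); both routes are valid, though the paper's is slightly cleaner for a purely local statement.
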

\begin{proof}
The proof can be found in \cite[Corollary 3.11]{Lu1996}. Note that since $b_0=0$ in \cite[(3.1) formula, p. 314]{Lu1996}, therefore the boundedness assumption in \cite[Corollary 3.11]{Lu1996} can be relaxed in our case, see \cite[p. 318]{Lu1996} for the details (cf \cite[p. 724]{Trudinger1967}).
\end{proof}
\begin{remark}
Harnack inequality \eqref{Harnack} for $1<p\leq Q$ can be also found in \cite[Theorem 3.1]{capogna1993embedding}. In \cite[Theorem 3.1]{capogna1993embedding}, the term $K(R)$ can be zero.
\end{remark}

\begin{corollary}\label{positivity of the first eigenfuncion}
We can choose the first eigenfunction $u_{1}$ to be positive in $\Omega$.
\end{corollary}
\begin{proof}
Since $|u_{1}|$ also minimizes \eqref{Rayleigh quotient} by Corollary \ref{|u_1| minimizes}, we can choose $u_{1}$ to be nonnegative in $\Omega$. Theorem \ref{Harnack inequality} asserts that 
$$u_{1}>0 \quad \text{in } \Omega$$ 
or 
$$u_{1} = 0 \quad \text{in } \Omega.$$  
The latter  contradicts Definition \ref{def:eigenfunction}, so  $u_{1}>0$ in $\Omega$. 
\end{proof}

\section{Simplicity and isolatedness of the first eigenvalue}\label{sec4}
It is well-known that the first eigenvalue of the (classic) Dirichlet $p$-Laplacian is simple and isolated, see \cite{Lin1990}. Likewise, it is reasonable to anticipate such results for the subelliptic $p$-Laplacian. Although in contrast to $C^{1,\alpha}$ regularity of eigenfunctions of the Dirichlet $p$-Laplacian (see e.g. \cite{dibenedetto1983}),  we were able to prove that  eigenfunctions of \eqref{eigenvalue problem}  are H\"older continuous with respect to the control distance. Nevertheless, the obtained results allow us to prove the simplicity and isolatedness of $\lambda_{1}$.
\begin{theorem}\label{simplicity theorem}
The first eigenvalue $\lambda_{1}$ is simple. In other words, if there exist two positive eigenfunctions $u_{1}$ and $\Tilde{u}_{1}$ associated with $\lambda_{1}$, then they are proportional.
\end{theorem}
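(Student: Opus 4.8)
The plan is to run the classical test-function argument (in the spirit of Lindqvist), adapted to the subelliptic setting; the facts already in hand — positivity, the $L^{\infty}$-bound, and continuity of eigenfunctions — are exactly what make this possible without invoking the Picone identity used in \cite{RSS2021}. Let $u_{1},\tilde u_{1}$ be positive eigenfunctions for $\lambda_{1}$. Since any nonzero scalar multiple of an eigenfunction is again an eigenfunction, I may normalize $\|u_{1}\|_{p}=\|\tilde u_{1}\|_{p}=1$. By Corollary \ref{positivity of the first eigenfuncion} both are strictly positive in $\Omega$, by Lemma \ref{L infty boundedness} both lie in $L^{\infty}(\Omega)$, and by Theorem \ref{Holder continuity} both are continuous, hence bounded below on every $\Omega'\Subset\Omega$. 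For $\varepsilon\in(0,1]$ set $v_{\varepsilon}:=u_{1}+\varepsilon$ and $w_{\varepsilon}:=\tilde u_{1}+\varepsilon$, and consider the candidate test functions
\[
\varphi:=\frac{v_{\varepsilon}^{p}-w_{\varepsilon}^{p}}{v_{\varepsilon}^{p-1}},\qquad \psi:=\frac{w_{\varepsilon}^{p}-v_{\varepsilon}^{p}}{w_{\varepsilon}^{p-1}}.
\]
Since $v_{\varepsilon},w_{\varepsilon}$ are bounded and bounded below by $\varepsilon$, the chain rule for $\mathcal{W}_{X}^{1,p}$ (cf.\ Proposition \ref{proposition Xu}) gives $\varphi,\psi\in\mathcal{W}_{X}^{1,p}(\Omega)$; and as $v_{\varepsilon}=w_{\varepsilon}=\varepsilon$ on $\partial\Omega$ makes $\varphi,\psi$ vanish there, they in fact lie in $\mathcal{W}_{X,0}^{1,p}(\Omega)$ and are admissible in \eqref{eigenfunction}.

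Next I would insert $\varphi$ into \eqref{eigenfunction} for $u_{1}$ and $\psi$ into \eqref{eigenfunction} for $\tilde u_{1}$, using $Xv_{\varepsilon}=Xu_{1}$, $Xw_{\varepsilon}=X\tilde u_{1}$ and
\[
X\varphi=\Bigl(1+(p-1)\tfrac{w_{\varepsilon}^{p}}{v_{\varepsilon}^{p}}\Bigr)Xu_{1}-p\,\tfrac{w_{\varepsilon}^{p-1}}{v_{\varepsilon}^{p-1}}X\tilde u_{1},\qquad X\psi=\Bigl(1+(p-1)\tfrac{v_{\varepsilon}^{p}}{w_{\varepsilon}^{p}}\Bigr)X\tilde u_{1}-p\,\tfrac{v_{\varepsilon}^{p-1}}{w_{\varepsilon}^{p-1}}Xu_{1},
\]
and then add the two resulting identities. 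Writing $a=v_{\varepsilon}$, $b=w_{\varepsilon}$, $\alpha=Xu_{1}/a$, $\beta=X\tilde u_{1}/b$, a direct computation collapses the left-hand side to
\[
\int_{\Omega}a^{p}\bigl(|\alpha|^{p}+(p-1)|\beta|^{p}-p|\beta|^{p-2}\beta\cdot\alpha\bigr)\,dx+\int_{\Omega}b^{p}\bigl(|\beta|^{p}+(p-1)|\alpha|^{p}-p|\alpha|^{p-2}\alpha\cdot\beta\bigr)\,dx,
\]
whose integrand is pointwise nonnegative: each parenthesis has the form $|\xi|^{p}-|\eta|^{p}-p|\eta|^{p-2}\eta\cdot(\xi-\eta)$, which is nonnegative by the strict convexity of $\omega\mapsto|\omega|^{p}$ (valid for every $p>1$) and vanishes precisely when $\xi=\eta$. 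The right-hand side equals $\lambda_{1}\int_{\Omega}\bigl(\tfrac{u_{1}^{p-1}}{v_{\varepsilon}^{p-1}}-\tfrac{\tilde u_{1}^{p-1}}{w_{\varepsilon}^{p-1}}\bigr)\bigl(v_{\varepsilon}^{p}-w_{\varepsilon}^{p}\bigr)\,dx$; since $u_{1},\tilde u_{1}\in L^{\infty}(\Omega)$ the integrand is bounded uniformly in $\varepsilon$, and because $u_{1},\tilde u_{1}>0$ it tends to $0$ a.e.\ as $\varepsilon\to0$, so by dominated convergence the right-hand side tends to $0$.

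Consequently the nonnegative left-hand integrals tend to $0$. Since $v_{\varepsilon}\to u_{1}$, $w_{\varepsilon}\to\tilde u_{1}$, $\alpha\to Xu_{1}/u_{1}$, $\beta\to X\tilde u_{1}/\tilde u_{1}$ a.e.\ in $\Omega$, Fatou's lemma forces
\[
u_{1}^{p}\Bigl(\bigl|\tfrac{Xu_{1}}{u_{1}}\bigr|^{p}+(p-1)\bigl|\tfrac{X\tilde u_{1}}{\tilde u_{1}}\bigr|^{p}-p\bigl|\tfrac{X\tilde u_{1}}{\tilde u_{1}}\bigr|^{p-2}\tfrac{X\tilde u_{1}}{\tilde u_{1}}\cdot\tfrac{Xu_{1}}{u_{1}}\Bigr)+\bigl(\text{symmetric term}\bigr)=0\quad\text{a.e. in }\Omega.
\]
As $u_{1},\tilde u_{1}>0$ and both brackets are nonnegative, the equality case of the convexity inequality yields $\tfrac{Xu_{1}}{u_{1}}=\tfrac{X\tilde u_{1}}{\tilde u_{1}}$ a.e.\ in $\Omega$. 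Finally I would set $z:=u_{1}/\tilde u_{1}$: on every $\Omega'\Subset\Omega$ the continuous function $\tilde u_{1}$ is bounded below, so $z\in\mathcal{W}_{X}^{1,p}(\Omega')$ with $Xz=\tilde u_{1}^{-2}\bigl(\tilde u_{1}\,Xu_{1}-u_{1}\,X\tilde u_{1}\bigr)=0$ a.e.; hence $z$ is locally constant, and since $\Omega$ is connected $z$ is constant on $\Omega$, i.e.\ $u_{1}$ and $\tilde u_{1}$ are proportional.

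The step I expect to be the main obstacle is the analytic bookkeeping around the regularization: verifying rigorously that $\varphi,\psi\in\mathcal{W}_{X,0}^{1,p}(\Omega)$ (the chain rule and the vanishing boundary trace), and — more delicately — justifying the passage $\varepsilon\to0$, since the limiting left-hand integrals need not be finite and must be handled by Fatou's lemma rather than by dominated convergence, which is precisely why it is essential that their integrands be pointwise nonnegative for every $\varepsilon$. The algebraic simplification of the summed identity and the final connectivity argument are routine.
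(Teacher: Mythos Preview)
Your proposal is correct and follows essentially the same Lindqvist-style argument as the paper: the same $\varepsilon$-regularized test functions $\varphi,\psi$, the same summation yielding a nonnegative integrand bounded above by the right-hand side that vanishes as $\varepsilon\to0$, and the same Fatou passage to conclude $X(u_{1}/\tilde u_{1})=0$ a.e. The only cosmetic difference is that the paper invokes the quantitative convexity inequalities of Proposition~\ref{convexity inequality} (splitting $p\ge2$ and $1<p<2$), whereas you use the strict convexity of $\omega\mapsto|\omega|^{p}$ directly and thereby avoid the case distinction.
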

\begin{proof}
The proof is an adaptation of the $p$-Laplacian case. Suppose there exist two distinct eigenfunctions $u_{1}$ and $\Tilde{u}_{1}$ corresponding to $\lambda_{1}$. By Corollary \ref{positivity of the first eigenfuncion}, it follows that $u_{1}>0$ and $\Tilde{u}_{1}>0$ in $\Omega$. The main idea of the proof is an appropriate choice of test functions which leads to the conclusion that the eigenfunctions are unique modulo scaling. Given $\varepsilon>0$, let
\begin{equation*}
\varphi:=\frac{(u_{1}+\varepsilon)^{p}-(\Tilde{u}_{1}+\varepsilon)^{p}}{(u_{1}+\varepsilon)^{p-1}} 
\end{equation*}
be a test function for the eigenfunction $u_{1}$, and let
\begin{equation*}
\psi:=\frac{(\Tilde{u}_{1}+\varepsilon)^{p}-(u_{1}+\varepsilon)^{p}}{(\Tilde{u}_{1}+\varepsilon)^{p-1}}
\end{equation*}
be a test function for the eigenfunction $\Tilde{u}_{1}$. Note that 
$$\sup_{\Omega}u_{1}<\infty \quad \text{and}\quad\sup_{\Omega}\Tilde{u}_{1}<\infty$$
by Lemma \ref{L infty boundedness}  and Theorem \ref{Holder continuity}, thus $\varphi$ and $\psi$ are well defined and belong to $\mathcal{W}_{X,0}^{1, p}(\Omega)$. Indeed, from $u_{1},\Tilde{u}_{1}\in\mathcal{C}^{0, \alpha}_{X}(\overline{\Omega})$ (see Theorem \ref{Holder continuity}), we have
$$
X \varphi=\left\{1+(p-1)\left(\frac{\Tilde{u}_{1}+\varepsilon}{u_{1}+\varepsilon}\right)^p\right\} X u_{1}-p\left(\frac{\Tilde{u}_{1}+\varepsilon}{u_{1}+\varepsilon}\right)^{p-1} X \Tilde{u}_{1}
$$
and
$$
X \psi=\left\{1+(p-1)\left(\frac{u_{1}+\varepsilon}{\Tilde{u}_{1}+\varepsilon}\right)^p\right\} X \Tilde{u}_{1}-p\left(\frac{u_{1}+\varepsilon}{\Tilde{u}_{1}+\varepsilon}\right)^{p-1} X u_{1},
$$
it follows that $u_{1}, \Tilde{u}_{1}\in \mathcal{W}_{X,0}^{1, p}(\Omega)$. Substituting  $u_{1},\varphi$ and $\Tilde{u}_{1},\psi$ into  \eqref{eigenfunction} respectively, and then combining them we derive
\begin{equation}\label{long calculation}
\begin{aligned}
&\lambda_{1}\int_{\Omega}\left(\left(\frac{u_{1}}{u_{1}+\varepsilon}\right)^{p-1}-\left(\frac{\Tilde{u}_{1}}{\Tilde{u}_{1}+\varepsilon}\right)^{p-1}\right)\big((u_{1}+\varepsilon)^{p}-(\Tilde{u}_{1}+\varepsilon)^{p}\big) d x\\
&= \int_{\Omega}\left(\left(1+(p-1)\left(\frac{\Tilde{u}_{1}+\varepsilon}{u_{1}+\varepsilon}\right)^{p}\right)\left|X u_{1}\right|^{p}+\left(1+(p-1)\left(\frac{u_{1}+\varepsilon}{\Tilde{u}_{1}+\varepsilon}\right)^{p}\right)\left|X \Tilde{u}_{1}\right|^{p}\right) d x \\
&-\int_{\Omega}\left(p\left(\frac{\Tilde{u}_{1}+\varepsilon}{u_{1}+\varepsilon}\right)^{p-1}\left|X u_{1}\right|^{p-2} X u_{1} \cdot X \Tilde{u}_{1}+p\left(\frac{u_{1}+\varepsilon}{\Tilde{u}_{1}+\varepsilon}\right)^{p-1}\left|X \Tilde{u}_{1}\right|^{p-2} X \Tilde{u}_{1} \cdot X u_{1}\right) d x \\
&= \int_{\Omega}\big((u_{1}+\varepsilon)^{p}-(\Tilde{u}_{1}+\varepsilon)^{p}\big)\big(\left|X \log (u_{1}+\varepsilon)\right|^{p}-\left|X \log (\Tilde{u}_{1}+\varepsilon)\right|^{p}\big) d x \\
&-\int_{\Omega} p (\Tilde{u}_{1}+\varepsilon)^{p}\left|X \log (u_{1}+\varepsilon)\right|^{p-2} X \log (u_{1}+\varepsilon) \cdot\big(X \log (\Tilde{u}_{1}+\varepsilon)-X \log (u_{1}+\varepsilon)\big) d x \\
&-\int_{\Omega} p (u_{1}+\varepsilon)^{p}\left|X \log (\Tilde{u}_{1}+\varepsilon)\right|^{p-2} X \log (\Tilde{u}_{1}+\varepsilon) \cdot\big(X \log (u_{1}+\varepsilon)-X \log (\Tilde{u}_{1}+\varepsilon)\big) d x.
\end{aligned}
\end{equation}

Let $p\geq 2$.  Multiplying the inequality \eqref{convexity (a)} with $\omega_{1}=X \log (\Tilde{u}_{1}+\varepsilon)$ and $\omega_{2}=X \log (u_{1}+\varepsilon)$ by $(u_{1}+\varepsilon)^p$ and then integrating over $\Omega$ we obtain
\begin{equation}\label{simplicity: convexity (a) vu}
\begin{aligned}
0  &\leq C \int_{\Omega}\frac{1}{(\Tilde{u}_{1}+\varepsilon)^{p}}\big|(\Tilde{u}_{1}+\varepsilon) X u_{1}-(u_{1}+\varepsilon) X \Tilde{u}_{1}\big|^{p} d x \\
& \leq \int_{\Omega}(u_{1}+\varepsilon)^{p}\big(\left|X \log (u_{1}+\varepsilon)\right|^{p}-\left|X \log (\Tilde{u}_{1}+\varepsilon)\right|^{p}\big) d x\\
&-\int_{\Omega} p (u_{1}+\varepsilon)^{p}\left|X \log (\Tilde{u}_{1}+\varepsilon)\right|^{p-2} X \log (\Tilde{u}_{1}+\varepsilon) \cdot\big(X \log (u_{1}+\varepsilon)-X \log (\Tilde{u}_{1}+\varepsilon)\big) d x.
\end{aligned}
\end{equation}
Multiplying the inequality \eqref{convexity (a)} with $\omega_{1}=X \log (u_{1}+\varepsilon)$ and $\omega_{2}=X \log (\Tilde{u}_{1}+\varepsilon)$ by $(\Tilde{u}_{1}+\varepsilon)^p$ and then integrating over $\Omega$ we obtain
\begin{equation}\label{simplicity: convexity (a) uv}
\begin{aligned}
0 & \leq C \int_{\Omega}\frac{1}{(u_{1}+\varepsilon)^{p}}\big|(\Tilde{u}_{1}+\varepsilon) X u_{1}-(u_{1}+\varepsilon) X \Tilde{u}_{1}\big|^{p} d x \\
& \leq-\int_{\Omega}(\Tilde{u}_{1}+\varepsilon)^{p}\big(\left|X \log (u_{1}+\varepsilon)\right|^{p}-\left|X \log (\Tilde{u}_{1}+\varepsilon)\right|^{p}\big) d x\\
& -\int_{\Omega} p (\Tilde{u}_{1}+\varepsilon)^{p}\left|X \log (u_{1}+\varepsilon)\right|^{p-2} X \log (u_{1}+\varepsilon) \cdot\big(X \log (\Tilde{u}_{1}+\varepsilon)-X \log (u_{1}+\varepsilon)\big) d x.
\end{aligned}   
\end{equation}
Combining \eqref{simplicity: convexity (a) vu} and \eqref{simplicity: convexity (a) uv}, then using \eqref{long calculation} we have
\begin{equation*}
\begin{aligned}
&0  \leq 2C \int_{\Omega}\left(\frac{1}{(\Tilde{u}_{1}+\varepsilon)^{p}}+\frac{1}{(u_{1}+\varepsilon)^{p}}\right)\big|(\Tilde{u}_{1}+\varepsilon) X u_{1}-(u_{1}+\varepsilon) X \Tilde{u}_{1}\big|^{p} d x \\
& \leq\lambda_{1} \int_{\Omega}\left(\left(\frac{u_{1}}{u_{1}+\varepsilon}\right)^{p-1}-\left(\frac{\Tilde{u}_{1}}{\Tilde{u}_{1}+\varepsilon}\right)^{p-1}\right)\big((u_{1}+\varepsilon)^{p}-(\Tilde{u}_{1}+\varepsilon)^{p}\big) d x.
\end{aligned}    
\end{equation*}
Observe that  
\begin{equation}\label{simplicity: dominated convergence}
\lim _{\varepsilon \rightarrow 0+} \int_{\Omega}\left(\left(\frac{u_{1}}{u_{1}+\varepsilon}\right)^{p-1}-\left(\frac{\Tilde{u}_{1}}{\Tilde{u}_{1}+\varepsilon}\right)^{p-1}\right)\big((u_{1}+\varepsilon)^{p}-(\Tilde{u}_{1}+\varepsilon)^{p}\big) d x=0    .
\end{equation}
Taking into account \eqref{simplicity: dominated convergence}, we apply the Fatou lemma to obtain
\begin{equation*}
\int_{\Omega}\left(\frac{1}{\Tilde{u}_{1}^{p}}+\frac{1}{u_{1}^{p}}\right)\big|\Tilde{u}_{1} X u_{1}-u_{1} X \Tilde{u}_{1}\big|^{p} d x=0.    
\end{equation*}
This is equivalent to  $X\left(u_{1}/\Tilde{u}_{1}\right)=0$ a.e. in $\Omega$.  Therefore,  $u_{1}=c\Tilde{u}_{1}$ a.e. in $\Omega$ provided $c\neq 0$. Then $u_{1}=c\Tilde{u}_{1}$ in $\Omega$ by Theorem \ref{Holder continuity}.

The proof of the case $1<p<2$ is similar. Multiplying the inequality \eqref{convexity (b)} with $\omega_{1}=X \log (\Tilde{u}_{1}+\varepsilon)$ and $\omega_{2}=X \log (u_{1}+\varepsilon)$ by $(u_{1}+\varepsilon)^p$ and then integrating over $\Omega$ we obtain
\begin{equation}\label{simplicity: convexity (b) vu}
\begin{aligned}
0  &\leq C \int_{\Omega}\frac{1}{(\Tilde{u}_{1}+\varepsilon)^{p}}\frac{\big|(\Tilde{u}_{1}+\varepsilon) X u_{1}-(u_{1}+\varepsilon) X \Tilde{u}_{1}\big|^{2}}{\big((\Tilde{u}_{1}+\varepsilon)\left|X u_{1}\right|+(u_{1}+\varepsilon)\left|X \Tilde{u}_{1}\right|\big)^{2-p}} d x  \\
& \leq \int_{\Omega}(u_{1}+\varepsilon)^{p}\big(\left|X \log (u_{1}+\varepsilon)\right|^{p}-\left|X \log (\Tilde{u}_{1}+\varepsilon)\right|^{p}\big) d x\\
&-\int_{\Omega} p (u_{1}+\varepsilon)^{p}\left|X \log (\Tilde{u}_{1}+\varepsilon)\right|^{p-2} X \log (\Tilde{u}_{1}+\varepsilon) \cdot\big(X \log (u_{1}+\varepsilon)-X \log (\Tilde{u}_{1}+\varepsilon)\big) d x.
\end{aligned}
\end{equation}
Multiplying the inequality \eqref{convexity (b)} with $\omega_{1}=X \log (u_{1}+\varepsilon)$ and $\omega_{2}=X \log (\Tilde{u}_{1}+\varepsilon)$ by $(\Tilde{u}_{1}+\varepsilon)^p$ and then integrating over $\Omega$ we obtain
\begin{equation}\label{simplicity: convexity (b) uv}
\begin{aligned}
0 & \leq C \int_{\Omega}\frac{1}{(u_{1}+\varepsilon)^{p}}\frac{\big|(\Tilde{u}_{1}+\varepsilon) X u_{1}-(u_{1}+\varepsilon) X \Tilde{u}_{1}\big|^{2}}{\big((\Tilde{u}_{1}+\varepsilon)\left|X u_{1}\right|+(u_{1}+\varepsilon)\left|X \Tilde{u}_{1}\right|\big)^{2-p}} d x \\
& \leq-\int_{\Omega}(\Tilde{u}_{1}+\varepsilon)^{p}\big(\left|X \log (u_{1}+\varepsilon)\right|^{p}-\left|X \log (\Tilde{u}_{1}+\varepsilon)\right|^{p}\big) d x\\
& -\int_{\Omega} p (\Tilde{u}_{1}+\varepsilon)^{p}\left|X \log (u_{1}+\varepsilon)\right|^{p-2} X \log (u_{1}+\varepsilon) \cdot\big(X \log (\Tilde{u}_{1}+\varepsilon)-X \log (u_{1}+\varepsilon)\big) d x.
\end{aligned}   
\end{equation}
Combining \eqref{simplicity: convexity (b) vu} and \eqref{simplicity: convexity (b) uv}, then using \eqref{long calculation} we have
\begin{equation}\label{simplicity 1<p<2}
\begin{aligned}
0 & \leq 2C \int_{\Omega}\left(\frac{1}{(\Tilde{u}_{1}+\varepsilon)^{p}}+\frac{1}{(u_{1}+\varepsilon)^{p}}\right) \frac{\big|(\Tilde{u}_{1}+\varepsilon) X u_{1}-(u_{1}+\varepsilon) X \Tilde{u}_{1}\big|^{2}}{\big((\Tilde{u}_{1}+\varepsilon)\left|X u_{1}\right|+(u_{1}+\varepsilon)\left|X \Tilde{u}_{1}\right|\big)^{2-p}} d x \\
& \leq\lambda_{1} \int_{\Omega}\left(\left(\frac{u_{1}}{u_{1}+\varepsilon}\right)^{p-1}-\left(\frac{\Tilde{u}_{1}}{\Tilde{u}_{1}+\varepsilon}\right)^{p-1}\right)\big((u_{1}+\varepsilon)^{p}-(\Tilde{u}_{1}+\varepsilon)^{p}\big) d x.
\end{aligned}    
\end{equation}
Again, the right-hand side of \eqref{simplicity 1<p<2} tends to zero as $\varepsilon\to 0$. So, the Fatou lemma ensures $X\left(u_{1}/\Tilde{u}_{1}\right)=0$ a.e. in $\Omega$. Then $u_{1}=c\Tilde{u}_{1}$ a.e. in $\Omega$ provided $c\neq 0$. Then $u_{1}=c\Tilde{u}_{1}$ in $\Omega$ by Theorem \ref{Holder continuity}.
\end{proof}

The techniques used in Theorem \ref{simplicity theorem} allow us to prove Theorem \ref{change sign theorem} and Theorem \ref{isolated theorem} (cf \cite{AL1987}).
\begin{theorem}\label{change sign theorem}
Let $\lambda\neq \lambda_{1}$ be an eigenvalue. Then the corresponding eigenfunction changes sign in $\Omega$.
\end{theorem}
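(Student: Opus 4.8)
The plan is to argue by contradiction, reusing the test-function machinery of Theorem \ref{simplicity theorem} but carrying two distinct eigenvalues through the computation. Suppose $u$ is an eigenfunction associated with an eigenvalue $\lambda\neq\lambda_1$ which does \emph{not} change sign in $\Omega$; replacing $u$ by $-u$ if necessary, we may assume $u\ge 0$ a.e.\ in $\Omega$. Since $u\not\equiv 0$ by Definition \ref{def:eigenfunction}, Theorem \ref{Holder continuity} lets us regard $u$ as continuous, and then the Harnack inequality (Theorem \ref{Harnack inequality}) yields the dichotomy $u>0$ in $\Omega$ or $u\equiv 0$ in $\Omega$, exactly as in the proof of Corollary \ref{positivity of the first eigenfuncion}; the second alternative is excluded, so $u>0$ in $\Omega$. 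Let $u_1$ be the positive first eigenfunction from Corollary \ref{positivity of the first eigenfuncion}, and recall $\sup_\Omega u<\infty$ and $\sup_\Omega u_1<\infty$ by Lemma \ref{L infty boundedness} and Theorem \ref{Holder continuity}. After rescaling we may assume $\|u\|_p=\|u_1\|_p=1$.

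Next I would repeat the computation of Theorem \ref{simplicity theorem} verbatim with $\Tilde u_1$ replaced by $u$: for $\varepsilon>0$ set
$$\varphi:=\frac{(u_1+\varepsilon)^p-(u+\varepsilon)^p}{(u_1+\varepsilon)^{p-1}},\qquad \psi:=\frac{(u+\varepsilon)^p-(u_1+\varepsilon)^p}{(u+\varepsilon)^{p-1}}.$$
Both are admissible in $\mathcal{W}_{X,0}^{1,p}(\Omega)$, since $u+\varepsilon$ and $u_1+\varepsilon$ are bounded above and are bounded below by $\varepsilon>0$. Substituting $(u_1,\varphi)$ into \eqref{eigenfunction} with eigenvalue $\lambda_1$ and $(u,\psi)$ into \eqref{eigenfunction} with eigenvalue $\lambda$, and adding the two identities, the purely algebraic manipulation underlying \eqref{long calculation} (whose left-hand side involves no eigenvalue) together with the convexity inequality \eqref{convexity (a)} when $p\ge 2$, or \eqref{convexity (b)} when $1<p<2$, gives, exactly as in \eqref{simplicity: convexity (a) vu}--\eqref{simplicity 1<p<2}, a chain of the form
$$0\;\le\;2C\int_{\Omega}\Bigl(\tfrac{1}{(u_1+\varepsilon)^p}+\tfrac{1}{(u+\varepsilon)^p}\Bigr)\mathcal{Q}_\varepsilon(x)\,dx\;\le\;\int_{\Omega}\Bigl(\lambda_1\bigl(\tfrac{u_1}{u_1+\varepsilon}\bigr)^{p-1}-\lambda\bigl(\tfrac{u}{u+\varepsilon}\bigr)^{p-1}\Bigr)\bigl((u_1+\varepsilon)^p-(u+\varepsilon)^p\bigr)\,dx,$$
where $\mathcal{Q}_\varepsilon\ge 0$ is the same nonnegative quantity built from $(u+\varepsilon)Xu_1-(u_1+\varepsilon)Xu$ that appears there.

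The decisive step is the limit $\varepsilon\to 0^+$ on the right. Since $u,u_1>0$ in $\Omega$ and both are bounded, the right-hand integrand is bounded uniformly for $\varepsilon\in(0,1]$ and converges pointwise to $(\lambda_1-\lambda)(u_1^p-u^p)$; hence, by dominated convergence and the normalization $\|u\|_p=\|u_1\|_p=1$, the right-hand side tends to $(\lambda_1-\lambda)\int_\Omega(u_1^p-u^p)\,dx=0$. Applying Fatou's lemma to the left-hand side exactly as in Theorem \ref{simplicity theorem} then forces $(u+\varepsilon)Xu_1-(u_1+\varepsilon)Xu\to 0$ in the relevant sense, i.e.\ $X(u_1/u)=0$ a.e.\ in $\Omega$; as $\Omega$ is connected this gives $u_1=cu$ a.e., hence $u_1=cu$ in $\Omega$ by Theorem \ref{Holder continuity}, with $c\neq 0$. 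But then $u$ is a constant multiple of the first eigenfunction, and taking $\varphi=u$ in \eqref{eigenfunction} shows the eigenvalue attached to $u$ equals $\int_\Omega|Xu|^p\,dx\big/\int_\Omega|u|^p\,dx=\lambda_1$, contradicting $\lambda\neq\lambda_1$. Therefore every eigenfunction for $\lambda$ changes sign in $\Omega$.

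The work here is mostly bookkeeping rather than conceptual: one must re-run the identity \eqref{long calculation} keeping the two distinct eigenvalues $\lambda_1$ and $\lambda$ separate on the right-hand side, and then verify that this modified right-hand side still vanishes as $\varepsilon\to0$ — which is precisely where the normalization $\|u\|_p=\|u_1\|_p=1$ enters, and is the one genuinely new input compared with Theorem \ref{simplicity theorem}. The remaining points (admissibility of $\varphi,\psi$ via Lemma \ref{L infty boundedness} and the lower bound $\varepsilon$ on the denominators, the Harnack upgrade from $u\ge 0$ to $u>0$, and the dominated/Fatou passages to the limit) are routine given the results already established.
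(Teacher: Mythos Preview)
Your argument is correct and uses the same test-function computation as the paper, but the endgame differs. After reaching
\[
0\;\le\;\int_{\Omega}\Bigl(\lambda_1\bigl(\tfrac{u_1}{u_1+\varepsilon}\bigr)^{p-1}-\lambda\bigl(\tfrac{u}{u+\varepsilon}\bigr)^{p-1}\Bigr)\bigl((u_1+\varepsilon)^p-(u+\varepsilon)^p\bigr)\,dx,
\]
the paper simply lets $\varepsilon\to 0$ to obtain $(\lambda_1-\lambda)\int_\Omega(u_1^p-u^p)\,dx\ge 0$ and then exploits the scaling freedom in the first eigenfunction: replacing $u_1$ by $Cu_1$ for $C$ large makes the integral strictly positive, and since $\lambda_1-\lambda<0$ this is an immediate contradiction. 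No normalization, no Fatou step, and no proportionality conclusion are needed. Your route instead fixes $\|u\|_p=\|u_1\|_p$ so that the right-hand side tends to zero, then runs the Fatou argument of Theorem~\ref{simplicity theorem} all the way through to conclude $u_1=cu$ and hence $\lambda=\lambda_1$. Both are valid; the paper's finish is shorter, while yours recycles more of the simplicity proof verbatim and, as a bonus, makes explicit the Harnack upgrade $u\ge 0\Rightarrow u>0$ that the paper's limit step tacitly requires.
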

\begin{proof}
Let $u$ be an eigenfunction corresponding to an eigenvalue $\lambda\neq\lambda_{1}$, i.e., $\lambda> \lambda_{1}$. Suppose by contradiction $u\geq 0$ in $\Omega$. Proceeding the same procedure as in the proof of Theorem \ref{simplicity theorem} we obtain
\begin{equation*}
\int_{\Omega}\left(\lambda_{1}\left(\frac{u_{1}}{u_{1}+\varepsilon}\right)^{p-1}-\lambda\left(\frac{u}{u+\varepsilon}\right)^{p-1}\right)\big((u_{1}+\varepsilon)^{p}-(u+\varepsilon)^{p}\big) d x \geq 0.    
\end{equation*}
Then
\begin{equation*}
\left(\lambda_{1}-\lambda\right)\int_{\Omega}\big(u_{1}^{p}-u^{p}\big) d x \geq 0  
\end{equation*}
as $\varepsilon\to 0$. Thanks to Theorem \ref{simplicity theorem}, one can multiply $u_{1}$ by a sufficiently large positive constant $C$ to derive $u_{1}^{p}-u^{p}>0$ in $\Omega$. However, this leads to a contradiction. Thus, $u$ changes sign in $\Omega$.
\end{proof}

\begin{theorem}\label{isolated theorem}
The first eigenvalue $\lambda_{1}$ is isolated.
\end{theorem}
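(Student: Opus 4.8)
The plan is to argue by contradiction, combining a compactness argument with the simplicity of $\lambda_{1}$ and the sign-change property of higher eigenfunctions. Suppose $\lambda_{1}$ is not isolated, so that there is a sequence of eigenvalues $\{\lambda_{k}\}$ with $\lambda_{k}\neq\lambda_{1}$ and $\lambda_{k}\to\lambda_{1}$; by Theorem \ref{existence and uniqueness of the Rayleigh quotient} necessarily $\lambda_{k}>\lambda_{1}$. Choose eigenfunctions $u_{k}$ normalized by $\|u_{k}\|_{p}=1$. Testing \eqref{eigenfunction} with $u_{k}$ gives $\int_{\Omega}|Xu_{k}|^{p}\,dx=\lambda_{k}$, so $\{u_{k}\}$ is bounded in $\mathcal{W}_{X,0}^{1,p}(\Omega)$. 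By the Eberlein--\v{S}mulian theorem, Proposition \ref{Weak convergence characterization} and the compact embedding of Theorem \ref{compact Sobolev embedding}, a subsequence (not relabelled) satisfies $u_{k}\to u$ strongly in $L^{p}(\Omega)$ and $Xu_{k}\rightharpoonup Xu$ weakly in $L^{p}(\Omega)$ for some $u\in\mathcal{W}_{X,0}^{1,p}(\Omega)$. Then $\|u\|_{p}=1$ and, by weak lower semicontinuity, $\int_{\Omega}|Xu|^{p}\,dx\le\liminf_{k}\int_{\Omega}|Xu_{k}|^{p}\,dx=\lambda_{1}$; since $\lambda_{1}$ is the infimum of the Rayleigh quotient, $u$ is a minimizer, hence by the uniqueness in Theorem \ref{existence and uniqueness of the Rayleigh quotient} and Corollary \ref{positivity of the first eigenfuncion} we have $u=\pm u_{1}$. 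Replacing $u_{k}$ by $-u_{k}$ if needed, I may assume $u=u_{1}>0$ in $\Omega$. (Alternatively one can invoke Theorem \ref{closed set} to identify $u$ as a $\lambda_{1}$-eigenfunction directly.)

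Next I would exploit that each $u_{k}$ changes sign. Set $\Omega_{k}^{-}:=\{x\in\Omega:\ u_{k}(x)<0\}$ and $u_{k}^{-}:=\max\{-u_{k},0\}$. By Theorem \ref{change sign theorem}, $|\Omega_{k}^{-}|>0$, and by Proposition \ref{proposition Xu} we have $u_{k}^{-}\in\mathcal{W}_{X,0}^{1,p}(\Omega)\setminus\{0\}$ with $Xu_{k}^{-}=-\mathbf{1}_{\Omega_{k}^{-}}\,Xu_{k}$ a.e. Using $u_{k}^{-}$ as a test function in \eqref{eigenfunction} for the pair $(u_{k},\lambda_{k})$ yields
\begin{equation*}
\int_{\Omega}|Xu_{k}^{-}|^{p}\,dx=\lambda_{k}\int_{\Omega}|u_{k}^{-}|^{p}\,dx .
\end{equation*}
Combining the H\"older inequality with the Sobolev embedding \eqref{embeddings for lower bound of eigenvalues} exactly as in the proof of Theorem \ref{lower bound lemma}, with $p^{*}$ given by \eqref{p asterisk}, gives
\begin{align*}
\int_{\Omega}|u_{k}^{-}|^{p}\,dx
&\le |\Omega_{k}^{-}|^{1-\frac{p}{p^{*}}}\Big(\int_{\Omega}|u_{k}^{-}|^{p^{*}}\,dx\Big)^{\frac{p}{p^{*}}}
\le C\,|\Omega_{k}^{-}|^{1-\frac{p}{p^{*}}}\int_{\Omega}|Xu_{k}^{-}|^{p}\,dx \\
&= C\lambda_{k}\,|\Omega_{k}^{-}|^{1-\frac{p}{p^{*}}}\int_{\Omega}|u_{k}^{-}|^{p}\,dx .
\end{align*}
Dividing by $\int_{\Omega}|u_{k}^{-}|^{p}\,dx>0$ and using $\lambda_{k}\to\lambda_{1}$, this forces $|\Omega_{k}^{-}|\ge c_{0}$ for some constant $c_{0}>0$ independent of $k$, for all large $k$.

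Finally I would derive a contradiction from the strong $L^{p}$ convergence $u_{k}\to u_{1}$. Passing to a further subsequence, $u_{k}\to u_{1}$ a.e. in $\Omega$; since $u_{1}>0$ a.e., $\mathbf{1}_{\Omega_{k}^{-}}\to 0$ a.e., and dominated convergence (with dominating function $\mathbf{1}_{\Omega}\in L^{1}(\Omega)$, as $|\Omega|<\infty$) gives $|\Omega_{k}^{-}|\to 0$, contradicting $|\Omega_{k}^{-}|\ge c_{0}>0$. Hence $\lambda_{1}$ is isolated. The step I expect to be the main obstacle is the uniform lower bound $|\Omega_{k}^{-}|\ge c_{0}$: it is the only place that genuinely uses the subelliptic Sobolev inequality (through the admissible exponent $p^{*}$ and, crucially, a constant $C$ independent of $k$) together with the sign-change property, and one must check carefully that $u_{k}^{-}$ is an admissible test function and that the constants do not degenerate along the sequence. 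The remaining ingredients---compactness, identification of the limit with $u_{1}$ via simplicity and uniqueness, and the measure-convergence argument---are routine given the results already established.
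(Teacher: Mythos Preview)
Your proposal is correct and follows essentially the same approach as the paper: argue by contradiction, use compactness and weak lower semicontinuity to identify the limit as a first eigenfunction, obtain a uniform lower bound on the measure of the sign-changing set via the Sobolev--H\"older argument applied to the truncated test function, and derive a contradiction from a.e.\ convergence to the strictly positive limit. The only cosmetic differences are that the paper bounds both $|\Omega_{k}^{+}|$ and $|\Omega_{k}^{-}|$ from below and phrases the final contradiction via $\limsup$ of the sets, whereas you (more economically) treat only $\Omega_{k}^{-}$ and use dominated convergence for the indicator functions; both routes are equivalent.
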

\begin{proof}
Suppose, by contradiction,  $\lambda_{1}$ is a limit point of the spectrum, i.e., there exists  a sequence of eigenvalues $\{\Tilde{\lambda}_{j}\}$ which converges to $\lambda_{1}$
 and let $\{\Tilde{u}_{j}\}$ be  corresponding eigenfunctions with $\|\Tilde{u}_{j}\|_{p}=1$. Hence,
\begin{equation*}
\Tilde{\lambda}_{j}=\int_{\Omega}|X\Tilde{u}_{j}|^{p}dx. 
\end{equation*}
Boundedness of $\{\Tilde{\lambda}_{j}\}$ implies the boundedness of $\{\Tilde{u}_{j}\}$ in $\mathcal{W}_{X,0}^{1, p}(\Omega)$. Therefore, by the Eberlein-\v{S}mulian theorem, Proposition \ref{Weak convergence characterization} and Theorem \ref{compact Sobolev embedding}, there exist a subsequence $\{\Tilde{u}_{j_{k}}\}$ and a function $\Tilde{u}\in \mathcal{W}_{X,0}^{1, p}(\Omega)$ such that
\begin{equation*}
\Tilde{u}_{j_{k}} \to \Tilde{u}\enspace \text{ strongly in } L^{p}(\Omega)\quad \text{and} \quad X \Tilde{u}_{j_{k}} \rightharpoonup  X \Tilde{u} \enspace \text{ weakly in } L^{p}(\Omega). 
\end{equation*}
In view of the weakly lower semicontinuity of $\|\cdot\|_{p}$ norm, we have
\begin{equation*}
\int_{\Omega}|X \Tilde{u}|^{p}dx \leq \lim_{j_{k}\to\infty}\Tilde{\lambda}_{j_{k}} =\lambda_{1}, 
\end{equation*}
so $\Tilde{u}$ minimizes the Rayleigh quotient and $\Tilde{u}$ is the first eigenfunction by Theorem \ref{existence and uniqueness of the Rayleigh quotient}. Then $\Tilde{u}>0$ in $\Omega$ by Corollary \ref{positivity of the first eigenfuncion} .

If  $\Tilde{\lambda}_{j_{k}} \neq \lambda_{1}$, then $\Tilde{u}_{j_{k}}$ changes signs in $\Omega$ by Theorem \ref{change sign theorem}. Therefore, $$\Omega_{j_{k}}^{+}:=\left\{x\in\Omega: \enspace \Tilde{u}_{j_{k}}(x)>0\right\}\quad \text{and} \quad \Omega_{j_{k}}^{-}:=\left\{x\in\Omega: \enspace \Tilde{u}_{j_{k}}(x)<0\right\}$$ 
are nonempty sets. Let us show that
\begin{equation}\label{lower bound of Tilde{u}_{j_{k}}}
    \left|\Omega_{j_{k}}^{+}\right| \geq \left(C\Tilde{\lambda}_{j_{k}}\right)^{\frac{p^{*}}{p^{*}-p}},
\end{equation}
where $p^{*}$ is expressed as \eqref{p asterisk}. Indeed, we can view  $\Tilde{u}_{j_{k}}^{+}=\max\{\Tilde{u}_{j_{k}}, 0\}$
as a test function by Proposition \ref{proposition Xu}, thus
\begin{equation*}
\begin{aligned}
\int_{\Omega_{j_{k}}^{+}}|X \Tilde{u}_{j_{k}}|^{p} d x &=\int_{\Omega}|X \Tilde{u}_{j_{k}}^{+}|^{p} d x=\int_{\Omega}|X \Tilde{u}_{j_{k}}|^{p-2} X \Tilde{u}_{j_{k}} \cdot X \Tilde{u}_{j_{k}}^{+} d x\\
&=\Tilde{\lambda}_{j_{k}} \int_{\Omega}|\Tilde{u}_{j_{k}}|^{p-2} \Tilde{u}_{j_{k}} \Tilde{u}_{j_{k}}^{+} d x=\Tilde{\lambda}_{j_{k}}\int_{\Omega}|\Tilde{u}_{j_{k}}^{+}|^{p} d x=\Tilde{\lambda}_{j_{k}}\int_{\Omega_{j_{k}}^{+}}|\Tilde{u}_{j_{k}}|^{p} d x.
\end{aligned}
\end{equation*}
Using the H\"older inequality and  Sobolev embedding \eqref{embeddings for lower bound of eigenvalues}, we obtain
\begin{equation*}
\begin{aligned}
\int_{\Omega_{j_{k}}^{+}}|X \Tilde{u}_{j_{k}}|^{p} d x= \Tilde{\lambda}_{j_{k}}\int_{\Omega_{j_{k}}^{+}}|\Tilde{u}_{j_{k}}|^{p} d x\leq \Tilde{\lambda}_{j_{k}}|\Omega_{j_{k}}^{+}|^{1-\frac{p}{p^{*}}}\left(\int_{\Omega}|\Tilde{u}_{j_{k}}^{+}|^{p^{*}}dx\right)^{\frac{p}{p^{*}}}\\
\leq C\Tilde{\lambda}_{j_{k}}|\Omega_{j_{k}}^{+}|^{1-\frac{p}{p^{*}}}\int_{\Omega_{j_{k}}^{+}}|X \Tilde{u}_{j_{k}}|^{p} d x,
\end{aligned}
\end{equation*}
which proves \eqref{lower bound of Tilde{u}_{j_{k}}}. Similarly, one can prove that
\begin{equation*}
\left|\Omega_{j_{k}}^{-}\right| \geq \left(C\Tilde{\lambda}_{j_{k}}\right)^{\frac{p^{*}}{p^{*}-p}} 
\end{equation*}
with the test function $\Tilde{u}_{j_{k}}^{-}=\min\{\Tilde{u}_{j_{k}}, 0\}$.
Consequently, we obtain
$$\left|\Omega^{\pm}\right|=\left|\limsup\Omega_{j_{k}}^{\pm}\right|> 0.$$ 
By \cite[Theorem 4.9]{Bre2011},  there exists $\{\Tilde{u}_{j_{k_{l}}}\}\subset \{\Tilde{u}_{j_{k}}\}$ such that $\Tilde{u}_{j_{k_{l}}}\to \Tilde{u}$ a.e. in $\Omega$. It turns out that $\Tilde{u}>0$ a.e. in $\Omega^{+}$ and $\Tilde{u}<0$ a.e. in $\Omega^{-}$, i.e., the first eigenfunction $\Tilde{u}$ changes sign in $\Omega$. This is a contradiction, so $\lambda_{1}$ is isolated.
\end{proof}

\section{Appendix}
The positive and negative parts of a function $u$ are given by
$$u^{+}:=\max \{u, 0\}\quad\text{and}\quad u^{-}:=\min \{u, 0\}.$$
\begin{proposition}\label{proposition Xu} \cite[Lemma 3.5]{garofalo1996isoperimetric}. 
Let $u \in \mathcal{W}_{X,0}^{1, p}(\Omega)$. Then  $u^{\pm},|u| \in \mathcal{W}_{X,0}^{1, p}(\Omega)$ and 
\begin{equation*}
X|u|=\begin{cases}
Xu, & \text{if $u>0$ a.e.  in $\Omega$,}\\
0, & \text{if $u= 0$ a.e.  in $\Omega$,}\\
-Xu & \text{if $u<0$ a.e.  in $\Omega$.}
\end{cases}
\end{equation*}
\end{proposition}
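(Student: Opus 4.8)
The plan is to prove everything for $|u|$ first and then read off the rest, because $u^{+}=\tfrac12(|u|+u)$ and $u^{-}=\tfrac12(u-|u|)$, so that membership of $u^{\pm}$ in $\mathcal{W}_{X,0}^{1,p}(\Omega)$ and the computation of their horizontal gradients follow from linearity once $X|u|$ is identified. Since $t\mapsto|t|$ fails to be smooth only at the origin, I would regularize it by
\[
f_{\varepsilon}(t):=\sqrt{t^{2}+\varepsilon^{2}}-\varepsilon ,\qquad \varepsilon>0,
\]
which belongs to $C^{\infty}(\mathbb{R})$, vanishes at $0$, satisfies $0\le f_{\varepsilon}(t)\le|t|$ and $f_{\varepsilon}(t)\to|t|$ as $\varepsilon\to0+$, and has derivative $f_{\varepsilon}'(t)=t/\sqrt{t^{2}+\varepsilon^{2}}$ with $|f_{\varepsilon}'|\le1$ and $f_{\varepsilon}'(t)\to\operatorname{sgn}(t)$ for $t\ne0$.

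The first step is a chain rule in $\mathcal{W}_{X,0}^{1,p}(\Omega)$: for every $u\in\mathcal{W}_{X,0}^{1,p}(\Omega)$ one has $f_{\varepsilon}(u)\in\mathcal{W}_{X,0}^{1,p}(\Omega)$ with $X\bigl(f_{\varepsilon}(u)\bigr)=f_{\varepsilon}'(u)\,Xu$ a.e.\ in $\Omega$. I would prove this by density. As $\mathcal{W}_{X,0}^{1,p}(\Omega)$ is by definition the completion of $C_{0}^{\infty}(\Omega)$ in $\mathcal{W}_{X}^{1,p}(\Omega)$, hence a closed subspace, pick $u_{k}\in C_{0}^{\infty}(\Omega)$ with $u_{k}\to u$ in $\mathcal{W}_{X}^{1,p}(\Omega)$ and, after passing to a subsequence, $u_{k}\to u$ and $X_{i}u_{k}\to X_{i}u$ a.e.\ in $\Omega$. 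Each $X_{i}$ is a first order differential operator with smooth coefficients, so the ordinary chain rule yields $X_{i}\bigl(f_{\varepsilon}(u_{k})\bigr)=f_{\varepsilon}'(u_{k})\,X_{i}u_{k}$ pointwise, and $f_{\varepsilon}(u_{k})\in C_{0}^{\infty}(\Omega)$ since $f_{\varepsilon}(0)=0$. From $|f_{\varepsilon}'|\le1$ we get $\|f_{\varepsilon}(u_{k})-f_{\varepsilon}(u)\|_{p}\le\|u_{k}-u\|_{p}\to0$, and from the splitting
\[
\|f_{\varepsilon}'(u_{k})\,Xu_{k}-f_{\varepsilon}'(u)\,Xu\|_{p}\le\|f_{\varepsilon}'(u_{k})\,(Xu_{k}-Xu)\|_{p}+\|(f_{\varepsilon}'(u_{k})-f_{\varepsilon}'(u))\,Xu\|_{p}
\]
the first term is $\le\|Xu_{k}-Xu\|_{p}\to0$, while the second tends to $0$ by dominated convergence (the integrand converges to $0$ a.e.\ by continuity of $f_{\varepsilon}'$ and is dominated by $2^{p}|Xu|^{p}$). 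Hence $f_{\varepsilon}(u_{k})\to f_{\varepsilon}(u)$ in $L^{p}$ and $X_{i}(f_{\varepsilon}(u_{k}))\to f_{\varepsilon}'(u)X_{i}u$ in $L^{p}$; by Proposition \ref{Weak convergence characterization} this gives the chain rule, and closedness of $\mathcal{W}_{X,0}^{1,p}(\Omega)$ in $\mathcal{W}_{X}^{1,p}(\Omega)$ gives $f_{\varepsilon}(u)\in\mathcal{W}_{X,0}^{1,p}(\Omega)$.

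The second step is the limit $\varepsilon\to0+$. Since $f_{\varepsilon}$ is $1$-Lipschitz and $f_{\varepsilon}(u)\to|u|$ pointwise, dominated convergence with dominant $|u|\in L^{p}(\Omega)$ gives $f_{\varepsilon}(u)\to|u|$ in $L^{p}(\Omega)$, while
\[
X\bigl(f_{\varepsilon}(u)\bigr)=\frac{u}{\sqrt{u^{2}+\varepsilon^{2}}}\,Xu\ \longrightarrow\ g:=\begin{cases}Xu & \text{a.e.\ on }\{u>0\},\\ 0 & \text{a.e.\ on }\{u=0\},\\ -Xu & \text{a.e.\ on }\{u<0\},\end{cases}
\]
and this convergence also takes place in $L^{p}(\Omega)$ because every term is dominated by $|Xu|\in L^{p}(\Omega)$. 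Passing to the limit in the identity $\int_{\Omega}f_{\varepsilon}(u)\,X_{i}^{*}\varphi\,dx=\int_{\Omega}X_{i}(f_{\varepsilon}(u))\,\varphi\,dx$ for $\varphi\in C_{0}^{\infty}(\Omega)$ identifies $g$ as the horizontal gradient of $|u|$, and since $|u|$ is the $\mathcal{W}_{X}^{1,p}$-limit of the functions $f_{\varepsilon}(u)\in\mathcal{W}_{X,0}^{1,p}(\Omega)$ we conclude $|u|\in\mathcal{W}_{X,0}^{1,p}(\Omega)$ with precisely the asserted formula for $X|u|$; finally $u^{\pm}=\tfrac12(|u|\pm u)\in\mathcal{W}_{X,0}^{1,p}(\Omega)$.

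I expect the only point requiring real care to be the passage to the limit in the chain rule: it involves a product of two sequences, $f_{\varepsilon}'(u_{k})$ converging only a.e.\ and $Xu_{k}$ converging in $L^{p}$, so one cannot invoke dominated convergence directly but must use the elementary splitting above. Everything else is a routine adaptation of the classical Euclidean Sobolev arguments, and the statement itself is classical, cf.\ \cite{garofalo1996isoperimetric}.
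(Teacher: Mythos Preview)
Your argument is correct and is the standard regularization proof: approximate $|t|$ by the smooth $1$-Lipschitz functions $f_{\varepsilon}(t)=\sqrt{t^{2}+\varepsilon^{2}}-\varepsilon$, establish the chain rule $X(f_{\varepsilon}(u))=f_{\varepsilon}'(u)\,Xu$ by density of $C_{0}^{\infty}(\Omega)$ in $\mathcal{W}_{X,0}^{1,p}(\Omega)$, and then let $\varepsilon\to0$ via dominated convergence. The only delicate point, which you handle correctly, is the splitting of $f_{\varepsilon}'(u_{k})\,Xu_{k}-f_{\varepsilon}'(u)\,Xu$ into a term controlled by $\|Xu_{k}-Xu\|_{p}$ and a term treated by dominated convergence; passing to an a.e.\ convergent subsequence there is harmless since any approximating sequence in $C_{0}^{\infty}(\Omega)$ suffices to place the limit in $\mathcal{W}_{X,0}^{1,p}(\Omega)$.

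Note that the paper itself does not give a proof of this proposition: it merely quotes it from \cite[Lemma~3.5]{garofalo1996isoperimetric}. Your write-up is precisely the argument one finds in that reference (adapted to H\"ormander vector fields), so there is nothing to compare beyond observing that you have supplied what the paper outsourced.
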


\begin{proposition}\label{convexity inequality}\cite[Lemma 4.2]{Lin1990}
\begin{enumerate}[label=(\roman*)]
\item  Let $1<p<2$. Then there exists a constant $C>0$ such that
\begin{equation}\label{convexity (b)}
|\omega_{2}|^{p} \geq|\omega_{1}|^{p}+p|\omega_{1}|^{p-2} \omega_{1} \cdot(\omega_{2}-\omega_{1})+ \frac{C|\omega_{2}-\omega_{1}|^{2}}{(|\omega_{2}|+|\omega_{1}|)^{2-p}} 
\end{equation}
holds  for all $\omega_{1}, \omega_{2} \in \mathbb{R}^{m}$. The constant $C$ depends on only $p$.

\item  Let $p \geq 2$. Then there exists a constant $C>0$ such that
\begin{equation}\label{convexity (a)}
|\omega_{2}|^{p} \geq|\omega_{1}|^{p}+p|\omega_{1}|^{p-2} \omega_{1} \cdot(\omega_{2}-\omega_{1})+C|\omega_{2}-\omega_{1}|^{p}
\end{equation}
holds for all $\omega_{1}, \omega_{2} \in \mathbb{R}^{m}$. The constant $C$ depends only on $p$.
\end{enumerate}
\end{proposition}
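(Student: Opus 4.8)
The plan is to derive both inequalities from the monotonicity of the vector field $F(\omega):=|\omega|^{p-2}\omega$ on $\mathbb R^m$, obtained by integrating along the segment joining $\omega_1$ and $\omega_2$. Set $g(\omega):=|\omega|^p$, a $C^1$ function with $\nabla g=pF$, and for $\omega_1\ne\omega_2$ put $\xi(t):=\omega_1+t(\omega_2-\omega_1)$, $t\in[0,1]$. Integrating $\frac{d}{dt}g(\xi(t))$ over $[0,1]$ and using $\xi(t)-\omega_1=t(\omega_2-\omega_1)$ yields
\[
g(\omega_2)-g(\omega_1)-pF(\omega_1)\cdot(\omega_2-\omega_1)=\int_0^1\frac{p}{t}\,\big\langle F(\xi(t))-F(\omega_1),\,\xi(t)-\omega_1\big\rangle\,dt.
\]
Hence it suffices to prove the monotonicity bounds $\langle F(b)-F(a),b-a\rangle\ge c_p|b-a|^p$ when $p\ge2$ and $\langle F(b)-F(a),b-a\rangle\ge c_p|b-a|^2(|a|+|b|)^{-(2-p)}$ when $1<p<2$. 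Granting these, one substitutes $a=\omega_1$, $b=\xi(t)$, uses $|\xi(t)-\omega_1|=t|\omega_2-\omega_1|$ and, in the second case, $|\xi(t)|+|\omega_1|\le2(|\omega_1|+|\omega_2|)$, and evaluates $\int_0^1 t^{p-1}\,dt$ resp. $\int_0^1 t\,dt$; this gives \eqref{convexity (a)} resp. \eqref{convexity (b)} with constants depending only on $p$.

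For the monotonicity bounds, off the origin $F$ is smooth with $\langle DF(\omega)h,h\rangle=|\omega|^{p-2}|h|^2+(p-2)|\omega|^{p-4}(\omega\cdot h)^2$. When $1<p<2$ the second term is nonpositive, so Cauchy--Schwarz gives $\langle DF(\omega)h,h\rangle\ge(p-1)|\omega|^{p-2}|h|^2$; integrating along $[a,b]$ and bounding $|\xi(t)|\le\max(|a|,|b|)\le|a|+|b|$ (so that $|\xi(t)|^{p-2}\ge(|a|+|b|)^{p-2}$ since $p-2<0$) yields $\langle F(b)-F(a),b-a\rangle\ge(p-1)|b-a|^2(|a|+|b|)^{-(2-p)}$. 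When $p\ge2$ I would instead use the algebraic identity
\[
\langle F(b)-F(a),b-a\rangle=|a|^p+|b|^p-\big(|a|^{p-2}+|b|^{p-2}\big)\,a\cdot b;
\]
subtracting $\tfrac12(|a|^{p-2}+|b|^{p-2})|b-a|^2$ the cross terms cancel and what remains is $(|a|^{p-2}-|b|^{p-2})(|a|^2-|b|^2)\ge0$, valid because $s\mapsto s^{p-2}$ is nondecreasing for $p\ge2$. Thus $\langle F(b)-F(a),b-a\rangle\ge\tfrac12(|a|^{p-2}+|b|^{p-2})|b-a|^2\ge2^{1-p}|b-a|^p$, where the last step uses $|a|^{p-2}+|b|^{p-2}\ge\max(|a|,|b|)^{p-2}\ge2^{2-p}(|a|+|b|)^{p-2}\ge2^{2-p}|a-b|^{p-2}$.

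The one point that needs care is the fundamental theorem of calculus for $t\mapsto F(\xi(t))$ when the segment $[\omega_1,\omega_2]$ contains the origin: then $DF$ blows up at that parameter $t_0$ and, for $1<p<2$, so does the integrand $|\xi(t)|^{p-2}$. However $F$ extends continuously to $0$ with $F(0)=0$ (as $|F(\omega)|=|\omega|^{p-1}\to0$), $t\mapsto F(\xi(t))$ is smooth off $t_0$ with $|(F\circ\xi)'(t)|\lesssim|t-t_0|^{p-2}\in L^1$ since $p-2>-1$, so $F\circ\xi$ is absolutely continuous and the displayed identities hold. Alternatively one proves the inequalities first under the open dense condition $0\notin[\omega_1,\omega_2]$ and passes to the general case by continuity of both sides in $(\omega_1,\omega_2)$, or notes that for linearly dependent $\omega_1,\omega_2$ the statement reduces to the one-dimensional inequality for $s\mapsto|s|^p$. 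I expect this bookkeeping to be the only genuine obstacle; the remaining computations are routine, and of course one may instead simply invoke \cite[Lemma 4.2]{Lin1990}.
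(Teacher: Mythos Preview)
Your argument is correct. The paper does not give its own proof of this proposition: it is recorded in the Appendix with a citation to \cite[Lemma 4.2]{Lin1990} and no argument, so there is nothing to compare at the level of strategy.

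For the record, your route---writing $g(\omega_2)-g(\omega_1)-p|\omega_1|^{p-2}\omega_1\cdot(\omega_2-\omega_1)$ as $\int_0^1\frac{p}{t}\langle F(\xi(t))-F(\omega_1),\xi(t)-\omega_1\rangle\,dt$ and reducing to the standard monotonicity estimates for $F(\omega)=|\omega|^{p-2}\omega$---is one of the usual proofs and all steps check out. The algebraic identity $\langle F(b)-F(a),b-a\rangle-\tfrac12(|a|^{p-2}+|b|^{p-2})|b-a|^2=\tfrac12(|a|^{p-2}-|b|^{p-2})(|a|^2-|b|^2)$ for $p\ge2$ is clean, and for $1<p<2$ the bound $\langle DF(\omega)h,h\rangle\ge(p-1)|\omega|^{p-2}|h|^2$ together with $|\xi(t)|\le|a|+|b|$ gives exactly what you need. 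Your handling of the singular case $0\in[\omega_1,\omega_2]$ (absolute continuity of $F\circ\xi$ via $|t-t_0|^{p-2}\in L^1$, or the density/continuity argument) is also fine; indeed, as you note, one could simply invoke the cited lemma.
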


\section{Acknowledgments}

This research was funded by the Science Committee of the Ministry of Science and Higher Education of Kazakhstan (Grant No. AP19674900). This work was also supported by Nazarbayev University grant 20122022FD4105. No new data were collected or generated during the course of research. We thank an anonymous referee whose comments and suggestions helped improve the clarity of our exposition.

\addcontentsline{toc}{chapter}{Bibliography}
%uncomment next line to change bibliography name to references
%\renewcommand{\bibname}{References}
\bibliography{refs}      %use a bibtex bibliography file refs.bib

\bibliographystyle{abbrv}  %use the plain bibliography style unsrt apalike siam

\end{document}